%
%

\input ./style/arxiv-general.cfg
\documentclass[MSNbibl,citesort,number,seceqn,dvips]{arxbj}
\makeatletter
   \@ifpackageloaded{graphicx}{}{\usepackage{graphicx}}
\makeatother


\aid{0}
\volume{21}
\issue{3}
\pubyear{2015}
\firstpage{1600}
\lastpage{1628}
\doi{10.3150/14-BEJ615} 

\makeatletter

\newcommand{\rright}{\right}
\newcommand{\lleft}{\left}
\newcommand{\rrVert}{\Vert}
\newcommand{\llVert}{\Vert}

\newtheorem{them}{Theorem}[section]
\newtheorem{lem}[them]{Lemma}
\newtheorem{lemma}{Lemma}
\newtheorem{pro}[them]{Proposition}
\newremark{rem}[them]{Remark}
\newremark{remm}[lemma]{Remark}
\newproclaim{Assumption}{Assumption}

\newcommand{\tr}{\operatorname{tr}}
\newcommand{\Var}{\operatorname{Var}}
\renewcommand{\epsilon}{\varepsilon}
\renewcommand{\emptyset}{\varnothing}

\def\sfrac#1#2{#1/#2}
\def\vfrac#1#2{(#1)/#2}

\def\sklfrac#1#2{(#1/#2)}

\def\sklafrac#1#2{(#1/(#2))}

\makeatother

\begin{document}
\begin{frontmatter}

\title{The logarithmic law of random determinant}
\runtitle{The logarithmic law of random determinant}

\begin{aug}
\author[1]{\inits{Z.}\fnms{Zhigang} \snm{Bao}\corref{}\thanksref{1}\ead[label=e1]{maomie2007@gmail.com}},
\author[2]{\inits{G.}\fnms{Guangming} \snm{Pan}\thanksref{2}\ead[label=e2]{gmpan@ntu.edu.sg}} \and
\author[3]{\inits{W.}\fnms{Wang} \snm{Zhou}\thanksref{3}\ead[label=e3]{stazw@nus.edu.sg}}
\address[1]{Department of Mathematics, Zhejiang University, Hangzhou,
P.R. China.\\
\printead{e1}}
\address[2]{School of Physical and Mathematical Sciences, Nanyang
Technological University, Singapore.\\
\printead{e2}}
\address[3]{Department of Statistics and Applied Probability, National
University of
Singapore, Singapore.\\
\printead{e3}}
\end{aug}

\received{\smonth{5} \syear{2013}}
\revised{\smonth{1} \syear{2014}}

%
\begin{abstract}
Consider the square random matrix $A_n=(a_{ij})_{n, n}$, where $\{
a_{ij}:=a_{ij}^{(n)},i,j=1,\ldots,n\}$ is a collection of independent
real random variables with means zero and variances one. Under the
additional moment condition
\[
\sup_n\max_{1\leq i,j\leq n}\mathbb{E}a_{ij}^4<
\infty,
\]
we prove Girko's logarithmic law of $\det A_n$ in the sense that as
$n\rightarrow\infty$
\begin{eqnarray*}
\frac{\log|\det A_n|-\sklfrac12\log(n-1)!}{\sqrt{\sklfrac12\log
n}}\stackrel{d}\longrightarrow N(0,1).
\end{eqnarray*}
\end{abstract}

%
\begin{keyword}
\kwd{CLT for martingale}
\kwd{logarithmic law}
\kwd{random determinant}
\end{keyword}

\end{frontmatter}

\section{Introduction}\label{sec1}
Consider the square random matrix $A_n=(a_{ij})_{n,n}$, where $\{
a_{ij}:=a_{ij}^{(n)},i,j=1,\ldots,n\}$ is a collection of independent
real random variables with means zero and variances one. Moreover, we assume
\begin{eqnarray*}
\sup_{n}\max_{1\leq i,j\leq n}\mathbb{E}a_{ij}^4<
\infty.
\end{eqnarray*}
The main purpose of this paper is to study the determinant of $A_n$. As
an important and fundamental function of a matrix, the random
determinant has been investigated in many articles. For instance, the
study of the moments of random determinants arose in the 1950s. One can
refer to Dembo \cite{Dee}, Forsythe and Tukey \cite{FT},
Nyquist, Rice and Riordan \cite{NRR}, Pr{\'e}kopa \cite{Pr} for this
topic. Besides, some lower and upper bounds for the magnitudes of
random determinants were obtained in Costello and Vu \cite{CV} and
Tao and Vu \cite{TV1}
recently. A basic problem in the random determinant theory is to derive
the fluctuation of the quantity $\log|\det A_n|$, which can give us an
explicit description of the limiting behaviour of $|\det A_n|$.
Particularly, when the entries of $A_n$ are i.i.d. Gaussian, Goodman \cite{Go} found that $\det A_n^2$ can be written as a
product of $n$
independent $\chi^2$ variables with different degrees of freedom. In
fact, by using the Householder transform repeatedly, one can get that
the joint distribution of the eigenvalues of $A_nA_n^T$ is the same as
that of the tridiagonal matrix
$
L_n=D_nD_n^T$,
where
\begin{eqnarray*}
D_n=\lleft( %
\begin{array} {c@{\quad}c@{\quad}c@{\quad}c}
a_n & & &
\\
b_{n-1} & a_{n-1} & &
\\
&\cdots&\cdots&
\\
& & b_1 & a_1
\\
\end{array} %
\rright).
\end{eqnarray*}
Here $\{a_n,\ldots,a_1,b_{n-1},\ldots,b_1\}$ is a collection of
independent variables such that $a_i\sim\chi_i, b_j\sim\chi_j$ for
$i=1,\ldots,n, j=1,\ldots, n-1$. Such a tridiagonal form is well
known for Gaussian matrix. One can refer to Dumitriu and Edelman \cite
{DE}, for instance.
Then apparently one has
\begin{eqnarray*}
\det A_n^2=\det A_nA_n^T
\stackrel{d}=\prod_{i=1}^n
a_i^2,
\end{eqnarray*}
which implies that $\log|\det A_n|$ can be represented by a sum of $n$
independent random variables. Then by the elementary properties of
$\chi^2$ distributions, the following CLT can be obtained
%
\begin{eqnarray}\label{1.1}
\frac{\log|\det A_n|-\sklfrac{1}2\log(n-1)!}{\sqrt{\sklfrac{1}2\log
n}}\stackrel{d}\longrightarrow N(0,1).
\end{eqnarray}
For details, one can see Rouault \cite{Ro} or the Appendix of
Costello and Vu \cite{CV} for
instance. Like most of topics in the Random Matrix Theory, one may ask
whether there is a ``universal'' phenomenon for the CLT of $\log|\det
A_n|$ under general distribution assumption. The best result on this
problem was given by Girko in \cite{Girko} (one
can also refer to
Girko's books \cite{Girko1} and  \cite{Girko2} or his paper  \cite
{Girko79} for his former results on this topic), where the author only
required the existence of the $(4+\delta)$th moment of the entries
for some positive $\delta$. Girko named (\ref{1.1}) as ``the
logarithmic law'' for random determinant. In Girko \cite
{Girko}, using an
elegant ``method of perpendiculars'' and combining the classical CLT
for martingale, Girko claimed (\ref{1.1}) is universal under the
moment assumption mentioned above. However, though the proof route of
Girko \cite{Girko} is clear and quite original, it seems
the proof is not
complete and several parts are lack of mathematical rigour. Recently,
 Nguyen and Vu \cite{NV} provided a transparent proof
of (\ref{1.1})
for the general distribution case, under much stronger moment
assumption in the sense that for all $t>0$,
%
\begin{eqnarray}\label{1.2}
\mathbb{P}\bigl(|a_{ij}|\geq t\bigr)\leq C_1\exp
\bigl(-t^{C_2}\bigr)
\end{eqnarray}
with some positive constants $C_1,C_2$ independent of $i,j,n$.
Obviously, (\ref{1.2}) implies the existence of the moment of any
order. The basic framework of the proof in Nguyen and Vu \cite{NV}
is similar to
Girko's method of perpendiculars, which will be introduced in the next
section. However, in order to provide a transparent proof, the authors
of Nguyen and Vu \cite{NV} inserted a lot of new ingredients.
Moreover, some
unrigorous steps in Girko \cite{Girko} can be fixed by
the methods provided
in Nguyen and Vu \cite{NV}. One may find that Nguyen and Vu
\cite{NV} also provided a convergence
rate of the logarithmic law as $\log^{-1/3+\mathrm{o}(1)}n$, which is
nearly optimal.

In this paper, also relying on the basic strategy of Girko's method of
perpendiculars, we will provide a complete and rigorous proof under a
weaker moment condition. More precisely, we only require the existence
of $4$th moment of the matrix entries. Our main result is

\begin{them} \label{thm.1.1}Let $A_n=(a_{ij})_{n,n}$ be a square random
matrices, where $\{a_{ij},1\leq i,j\leq n\}$ is a collection of
independent real random variables with common mean $0$ and variance
$1$. Moreover, we assume
\begin{eqnarray*}
\sup_{n}\max_{1\leq i,j\leq n}\mathbb{E}a_{ij}^4<
\infty.
\end{eqnarray*}
Then we have the logarithmic law for $|\det A_n|$: as $n$ tends to infinity,
\begin{eqnarray*}
\frac{\log|\det A_n|-\sklfrac{1}2\log(n-1)!}{\sqrt{\sklfrac{1}2\log
n}}\stackrel{d}\longrightarrow N(0,1).
\end{eqnarray*}
\end{them}

Our paper is organized as follows. In Section~\ref{sec2}, we will sketch the
main idea of Girko's method of perpendiculars and the proof route of
Theorem~\ref{thm.1.1}. In Sections~\ref{sec3}, \ref{sec4} and \ref{sec5}, we will present the
details of the proof with the aid of some additional lemmas, whose
proofs will be given in the \hyperref[appA]{Appendix}.

Throughout the paper, the notation such as $C,K$ will be used to denote
some positive constants independent of $n$, whose values may differ
from line to line.
We use $\llVert \cdot\rrVert _2$ and $\llVert \cdot
\rrVert _{\mathrm{op}}$ to represent the Euclidean
norm of a vector and the operator norm of a matrix respectively as usual.

\section{Girko's method of perpendiculars}\label{sec2}
In this section, we will sketch the main framework of Girko's method of
perpendiculars, which was also pursued by Nguyen and Vu in the recent
work  \cite{NV}. To state this method rigorously, we
need the following
proposition whose proof will be given in Appendix \hyperref[appB]{B}.

\begin{pro} \label{pro.1.2} For the matrix $A_n$ defined in Theorem~\ref{1.1}, we can find a modified matrix $A'_n=(a'_{ij})_{n,n}$
satisfying the assumptions in Theorem~\ref{1.1} such that
%
\begin{eqnarray}\label{1.1.1.2}
\mathbb{P}\bigl\{\mbox{all square submatrices of }A'_n\mbox{ are
invertible}\bigr\}=1
\end{eqnarray}
and
%
\begin{eqnarray}\label{1.1.1.1}
\mathbb{P}\bigl\{\log|\det A_n|-\log\bigl|\det A'_n\bigr|=
\mathrm{o}(\sqrt{\log n})\bigr\} =1-\mathrm{o}(1).
\end{eqnarray}
\end{pro}

\begin{rem} The construction of the modified matrix $A'_n$ can be found
in Nguyen and Vu \cite{NV}. The strategy is to set
$a'_{ij}:=(1-\epsilon
^2)^{1/2}a_{ij}+\epsilon\theta_{ij}$, where $\{\theta_{ij},1\leq
i,j\leq n\}$ is a collection of independent bounded continuous random
variables with common mean zero and variance one and is independent of
$A_n$. By choosing $\epsilon$ to be extremely small, say $n^{-Kn}$ for
large enough constant $K>0$, it was shown in Nguyen and Vu \cite
{NV} that (\ref
{1.1.1.1}) holds. The proof in Nguyen and Vu \cite{NV} relies on a
lower bound
estimate on the smallest singular value of square random matrices
provided in Theorem~2.1 of Tao and Vu \cite{TV}.
To adapt to our condition, we will use Theorem~4.1 of G{\"o}tze and Tikhomirov
\cite{GT}
instead (by choosing $p_n=1$ in G{\"o}tze and Tikhomirov \cite{GT}). For
convenience of the
reader, we sketch the proof of Proposition~\ref{pro.1.2} in
Appendix \hyperref[appB]{B}. The proof is just a slight modification of that in Nguyen and Vu
\cite{NV} under
our setting and assumptions.
\end{rem}

Therefore, with the aid of Proposition~\ref{pro.1.2}, we can always
work under the following assumption.

{\renewcommand{\theAssumption}{C\tsub{0}}
\begin{Assumption}\label{assC} We assume that
$A_n=(a_{ij})_{n,n}$ is a square random matrix, where $\{a_{ij},1\leq
i,j\leq n\}$ is a collection of independent real random variables with
common mean zero and variance one. Besides, $\sup_n\max_{1\leq i,j\leq
n}\mathbb{E} a_{ij}^4<\infty$. Moreover, all square submatrices of
$A_n$ are invertible with probability one.
\end{Assumption}}

The starting point of the method of perpendiculars is the elementary
fact that the magnitude of the determinant of $n$ real vectors in $n$
dimensions is equal to the volume of the parallelepiped spanned by
those vectors. Therefore, by the basic ``base times height'' formula,
one can represent $|\det A_n|$ by the products of $n$ perpendiculars.
To make it more precise, we introduce some notations at first.

In the sequel, we will use $\mathbf{a}_k^T$ to denote the $k$th row of
$A_n$. And let $A_{(k)}$ be the $k\times n$ rectangular matrix formed
by the first $k$ rows of $A_n$. Particularly, one has $A_{(1)}=\mathbf
{a}_1^T$ and $A_{(n)}=A_n$. Moreover, we use the notation $V_i$ to
denote the subspace generated by the first $i$ rows of $A_n$ and
$P_i=(p_{jk}(i))_{n,n}$ to denote the projection matrix onto the space
$V_i^\bot$. Let $\gamma_{i+1}$ be the distance from $\mathbf
{a}_{i+1}^T$ to $V_i$ for $1\leq i\leq n-1$. And we set $\gamma
_1=\llVert \mathbf{a}_1^T\rrVert _2$. Then by the ``base
times height'' formula,
we can write
%
\begin{eqnarray}\label{2.0}
\det A_n^2=\prod_{i=0}^{n-1}
\gamma_{i+1}^2.
\end{eqnarray}
Observe that $\gamma_{i+1}$ is the norm of the projection of $\mathbf
{a}_{i+1}^T$ onto $V_i^\bot$. Thus we also have
%
\begin{eqnarray}\label{2.1}
\gamma_{i+1}^2=\mathbf{a}_{i+1}^TP_i
\mathbf{a}_{i+1},\qquad  1\leq i\leq n-1.
\end{eqnarray}
Moreover, by the definition of $V_i$ and Assumption \textup{\ref{assC}} one
has that with probability one $A_{(i)}A_{(i)}^T$ is invertible and
%
\begin{eqnarray}\label{2.5}
P_i=I_n-A_{(i)}^T
\bigl(A_{(i)}A_{(i)}^T\bigr)^{-1}A_{(i)}.
\end{eqnarray}
Then a direct consequence of the definition of $\gamma_{i+1}$ is
\begin{eqnarray*}
\mathbb{E}\bigl\{\gamma_{i+1}^2|P_i\bigr
\}=\tr P_i=n-i,\qquad  0\leq i\leq n-1.
\end{eqnarray*}
It follows from (\ref{2.0}) that
\begin{eqnarray*}
\log\det A_n^2=\sum_{i=0}^{n-1}
\log\gamma_{i+1}^2,
\end{eqnarray*}
which yields
%
\begin{eqnarray}\label{2.2}
\log\det A_n^2-\log(n-1)!=\sum
_{i=0}^{n-1}\log\frac{\gamma
_{i+1}^2}{n-i}+\log n.
\end{eqnarray}
Now we set
\begin{eqnarray*}
X_{i+1}:=X_{n,i+1}=\frac{\gamma_{i+1}^2-(n-i)}{n-i}.
\end{eqnarray*}
And we write
%
\begin{eqnarray}\label{2.4}
\log\frac{\gamma_{i+1}^2}{n-i}=X_{i+1}-\frac{X_{i+1}^2}{2}+R_{i+1},
\end{eqnarray}
where
\begin{eqnarray*}
R_{i+1}:=\log(1+X_{i+1})-\biggl(X_{i+1}-
\frac{X_{i+1}^2}{2}\biggr).
\end{eqnarray*}
Then by (\ref{2.2}), one can write
%
\begin{eqnarray}\label{2.3}
&&\frac{\log\det A_n^2-\log(n-1)!}{\sqrt{2\log n}}
\nonumber
\\[-8pt]\\[-8pt]
&&\quad =\frac{1}{\sqrt{2\log n}}\sum_{i=0}^{n-1}X_{i+1}-
\frac{1}{\sqrt {2\log n}}\Biggl(\sum_{i=0}^{n-1}
\frac{X_{i+1}^2}{2}-\log n\Biggr)+\frac
{1}{\sqrt{2\log n}}\sum
_{i=0}^{n-1}R_{i+1}.
\nonumber
\end{eqnarray}

Crudely speaking, the main route is to prove that the first term of
(\ref{2.3}) weakly converges to the standard Gaussian distribution and
the remaining two terms tend to zero in probability. Let $\mathcal
{E}_i$ be the $\sigma$-algebra generated by the first $i$ rows of
$A_n$, by definition we have
\begin{eqnarray*}
\mathbb{E}\{X_{i+1}|\mathcal{E}_i\}=0.
\end{eqnarray*}
Thus $X_1,\ldots,X_n$ is a martingale difference sequence with respect
to the filtration $\emptyset\subset\mathcal{E}_1\subset\cdots
\subset\mathcal{E}_{n-1}$. In Girko \cite{Girko},
under the assumption of
the existence of $(4+\delta)$th moments of the matrix entries for some
$\delta>0$, Girko used the CLT for martingales to show the first term
of (\ref{2.3}) is asymptotically Gaussian. He also showed that the
last term of (\ref{2.3}) is asymptotically negligible. However, some
steps in the proofs of these two parts are lack of mathematical rigour.
Moreover, we do not find the discussion of the second term of (\ref
{2.3}) in Girko's original proof in \cite{Girko}.
Recently, Nguyen and
Vu provided a complete proof under the assumption that the
distributions of the matrix entries satisfy (\ref{1.2}), thus with
finite moments of all orders. In the following sections, we will also
adopt the representation (\ref{2.4}) and the theory on the weak
convergence of martingales.

It will be clear that the proof will rely on some approximations of
$X_{i+1}$ and $R_{i+1}$. However, these approximations are
$i$-dependent and the large $i$ case turns out to be badly
approximated. To see this, we can take the Gaussian case for example.
Note that when the entries are standard Gaussian, $\gamma_{i+1}^2\sim
\chi^2_{n-i}$, thus
\begin{eqnarray*}
X_{i+1}=\mathcal{O}\bigl((n-i)^{-1/2}\bigr),\qquad  R_{i+1}=
\mathcal{O}\bigl((n-i)^{-3/2}\bigr)
\end{eqnarray*}
with high probability. Especially, when $n-i$ is $\mathcal{O}(1)$, the
main term $X_{i+1}$ and the negligible term $R_{i+1}$ are comparable to
be $\mathcal{O}(1)$. Such a fact will be an obstacle if we use crude
estimations for $X_{i+1}$ and $R_{i+1}$ for general distribution case.
This is explained such as follows. When we estimate the last term of
(\ref{2.3}), a basic strategy is to use the Taylor expansion of $\log
(1+X_{i+1})$ to gain a relatively small remainder $R_{i+1}$, which
requires $|X_{i+1}|\leq1-c$ for some small positive constant $c$.
However, as we mentioned above, when $n-i$ is too small, such a bound
is hard to be guaranteed since $X_{i+1}=\mathcal{O}(1)$ with high
probability, especially under the assumption of the $4$th moment.
Fortunately, if all the $\mathbf{a}_{i+1}^T$'s are Gaussian for large
$i\geq n-s_1$ for some positive number $s_1$, $\gamma_{i+1}^2$ are
independent $\chi^2$ variables for all $i\geq n-s_1$ even if
$A_{(n-s_1)}$ is generally distributed. Such an explicit distribution
information can be used to deal with the large $i$ part. Therefore, in
 \cite{Girko} Girko proposed to replace some rows
by Gaussian ones and
prove the logarithmic law for the matrix after replacement, and then
recover the result to the original one by a comparison procedure. Such
a strategy was also used in Nguyen and Vu \cite{NV}. To pursue this
idea, we
set
\[
s_1=\bigl\lfloor\log^{3a} n\bigr\rfloor
\]
for some sufficiently large positive constant $a$. Our proof route can
be split into the following five steps.
\begin{longlist}[(iii)]
\item[(i)]
\begin{eqnarray*}
\frac{\sum_{i=0}^{n-s_1}X_{i+1}}{\sqrt{2\log n}}\stackrel {d}\longrightarrow N(0,1).
\end{eqnarray*}

\item[(ii)]
\begin{eqnarray*}
\frac{\sum_{i=0}^{n-s_1}X_{i+1}^2/2-\log n}{\sqrt{2\log n}}\stackrel {\mathbb{P}}\longrightarrow0.
\end{eqnarray*}

\item[(iii)]
\begin{eqnarray*}
\frac{\sum_{i=0}^{n-s_1}R_{i+1}}{\sqrt{2\log n}}\stackrel{\mathbb {P}}\longrightarrow0.
\end{eqnarray*}

\item[(iv)] If the last $s_1$ rows of $A_n$ are Gaussian, then
\begin{eqnarray*}
\frac{\sum_{i=n-s_1}^{n-1}\log\sklafrac{\gamma_{i+1}^2}{n-i}}{\sqrt {2\log n}}\stackrel{\mathbb{P}}\longrightarrow0.
\end{eqnarray*}

\item[(v)] Let $B_n$ be a random matrix satisfying the
basic Assumption \textup{\ref{assC}} and differing from $A_n$ only in the
last $s_1$ rows. Then one has
\begin{eqnarray*}
\sup_{x}\bigl|\mathbb{P}\bigl\{|\det A_n|\leq x\bigr\}-
\mathbb{P}\bigl\{|\det B_n|\leq x\bigr\}\bigr|\longrightarrow0.
\end{eqnarray*}
\end{longlist}

We will prove (i) and (ii) together in
Section~\ref{sec3}, and prove (iii) and (iv) in
Section~\ref{sec4}. Section~\ref{sec5} is devoted to the proof of (v).

\section{Convergence issue on the martingale difference sequence}\label{sec3}
In this section, we will prove the statements (i) and
(ii). The arguments for both two parts heavily rely on the
fact that $\{X_i,1\leq i\leq n\}$ is a martingale difference sequence.

In order to prove (i), we will use the following classical
CLT for martingales, which can be found in the book of
Hall and Heyde \cite{HH}, for instance.

\begin{pro}Let $\{S_{ni},\mathcal{F}_{ni},1\leq i\leq k_n,n\geq1\}$
be a zero-mean, square-integrable martingale array with differences
$Z_{ni}$. Suppose that
\begin{eqnarray*}
\max_{i}|Z_{ni}|&\stackrel{\mathbb{P}}
\longrightarrow&0,
\\
\sum_{i}Z_{ni}^2&\stackrel{
\mathbb{P}}\longrightarrow&1.
\end{eqnarray*}
Moreover, $\mathbb{E}(\max_{i}Z_{ni}^2)$ is bounded in $n$.
Then we have
\begin{eqnarray*}
S_{nk_n}\stackrel{d}\longrightarrow N(0,1).
\end{eqnarray*}
\end{pro}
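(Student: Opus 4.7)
The plan is to establish pointwise convergence of the characteristic function $\phi_n(u) := \mathbb{E}[\exp(iuS_{nk_n})]$ to $e^{-u^2/2}$ for each fixed $u\in\mathbb{R}$, and then conclude by L\'evy's continuity theorem. This is the standard strategy for martingale CLTs of Hall--Heyde type, exploiting the martingale difference property $\mathbb{E}[Z_{nj}\mid\mathcal{F}_{n,j-1}]=0$ in place of independence.

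\textbf{Truncation.} Since $\max_i|Z_{ni}|\xrightarrow{\mathbb{P}}0$ and $\mathbb{E}[\max_i Z_{ni}^2]$ is bounded, which provides uniform integrability of $\max_i Z_{ni}^2$, one can pick a sequence $\epsilon_n\to 0$ slowly and pass to the centered truncated increments
\[
\widetilde Z_{ni} := Z_{ni}\mathbf{1}\{|Z_{ni}|\le\epsilon_n\} - \mathbb{E}\bigl(Z_{ni}\mathbf{1}\{|Z_{ni}|\le\epsilon_n\} \mid \mathcal{F}_{n,i-1}\bigr).
\]
This produces a new martingale difference array with $|\widetilde Z_{ni}|\le 2\epsilon_n$ almost surely, such that $\sum_i\widetilde Z_{ni}$ differs from $S_{nk_n}$ by a term negligible in $L^2$, while $\sum_i\widetilde Z_{ni}^2$ still tends to $1$ in probability.

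\textbf{Characteristic function via telescoping.} For the truncated array, use the Taylor expansion
\[
e^{iu\widetilde Z_{nj}} = 1 + iu\widetilde Z_{nj} - \tfrac{u^2}{2}\widetilde Z_{nj}^2 + r_{nj},\qquad |r_{nj}|\le \tfrac{|u|^3}{6}\epsilon_n\widetilde Z_{nj}^2,
\]
together with the martingale identity $\mathbb{E}[\widetilde Z_{nj}\mid\mathcal{F}_{n,j-1}]=0$. A convenient formulation is to introduce the mean-one complex martingale
\[
M_n := \prod_{j=1}^{k_n} \frac{e^{iu\widetilde Z_{nj}}}{\mathbb{E}[e^{iu\widetilde Z_{nj}}\mid\mathcal{F}_{n,j-1}]}
\]
and write $\phi_n(u) = \mathbb{E}\bigl[M_n \prod_{j=1}^{k_n}\mathbb{E}[e^{iu\widetilde Z_{nj}}\mid\mathcal{F}_{n,j-1}]\bigr]$. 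Each inner conditional expectation equals $1-\tfrac{u^2}{2}V_{nj}+O(\epsilon_n V_{nj})$ with $V_{nj}:=\mathbb{E}[\widetilde Z_{nj}^2\mid\mathcal{F}_{n,j-1}]$, so the product inside is close to $\exp(-\tfrac{u^2}{2}\sum_j V_{nj})$. Using $\sum_j V_{nj}\xrightarrow{\mathbb{P}}1$, which follows from $\sum_j\widetilde Z_{nj}^2\xrightarrow{\mathbb{P}}1$ by a quadratic variation comparison, together with bounded convergence, one obtains $\phi_n(u)\to e^{-u^2/2}$.

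\textbf{Main obstacle.} The delicate step is the bookkeeping of the multiplicative error in the telescoping product. One must combine the uniform smallness $|\widetilde Z_{nj}|\le 2\epsilon_n$ (which controls the cubic Taylor remainder cumulatively via $\sum_j\widetilde Z_{nj}^2=O_{\mathbb{P}}(1)$), the passage between the quadratic variation $\sum_j\widetilde Z_{nj}^2$ and its predictable version $\sum_j V_{nj}$, and the uniform integrability from boundedness of $\mathbb{E}[\max_j Z_{nj}^2]$ (so that dominated convergence can be applied inside the expectation). Once this analytic bookkeeping is settled, L\'evy's continuity theorem delivers $S_{nk_n}\xrightarrow{d}N(0,1)$.
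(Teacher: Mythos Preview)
The paper does not give its own proof of this proposition: it is quoted verbatim as a classical martingale CLT and attributed to Hall and Heyde \cite{HH}, with no argument supplied. So there is no ``paper's proof'' to compare against; your task was really to reconstruct a standard proof of a textbook result.

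Your sketch follows precisely the characteristic-function route that Hall and Heyde use (their Theorem~3.2): truncate and recenter to obtain bounded martingale differences, expand $e^{iu\widetilde Z_{nj}}$ to second order, form the telescoping product (equivalently, the mean-one exponential martingale $M_n$), and pass to the limit using $\sum_j V_{nj}\xrightarrow{\mathbb{P}}1$ together with uniform integrability. The outline is correct and the identification of the ``main obstacle'' is accurate. Two points deserve a little more care if you flesh this out. First, the replacement of $\sum_j \widetilde Z_{nj}^2$ by the predictable quadratic variation $\sum_j V_{nj}$ is itself a nontrivial lemma (in Hall--Heyde it appears as Theorem~2.23); it does not follow from a one-line ``quadratic variation comparison'' and requires the bounded-increment property together with a separate martingale argument. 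Second, the product $\prod_j \mathbb{E}[e^{iu\widetilde Z_{nj}}\mid\mathcal{F}_{n,j-1}]$ is random and may be unbounded, so one needs either a stopping-time localization of $\sum_j V_{nj}$ or a careful splitting before invoking dominated convergence; the boundedness of $\mathbb{E}[\max_i Z_{ni}^2]$ alone does not directly dominate this product. With those two points supplied, your argument is the standard one.
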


Now we use the above proposition to prove (i). Let
$k_n=n-s_1$, $Z_{ni}=X_{i}/\sqrt{2\log n}$. Thus, it suffices to show that
%
\begin{eqnarray}\label{3.1}
\frac{1}{\sqrt{\log n}}\max_{0\leq i\leq n-s_1}|X_{i+1}|&\stackrel {
\mathbb{P}}\longrightarrow&0,
\\
\label{3.2}
\frac{1}{2\log n}\sum_{i=0}^{n-s_1}X_{i+1}^2
&\stackrel{\mathbb {P}}\longrightarrow&1,
\end{eqnarray}
and
%
\begin{eqnarray}\label{3.3}
\frac{1}{\log n}\mathbb{E}\Bigl(\max_{i}X_{i+1}^2
\Bigr)\leq\frac{1}{\log
n}\mathbb{E}\sum_{i=0}^{n-s_1}X_{i+1}^2
\leq C
\end{eqnarray}
for some positive constant $C$ independent of $n$.
To verify (\ref{3.1}) it suffices to show the following lemma.

\begin{lem} \label{lem.3.2.1}
Under the Assumption \textup{\ref{assC}}, we have for any constant
$\epsilon>0$
\begin{eqnarray*}
\sum_{i=0}^{n-s_1}\mathbb{P}\biggl\{
\frac{1}{\sqrt{\log n}}|X_{i+1}|\geq \epsilon\biggr\}\longrightarrow0
\end{eqnarray*}
as $n$ tends to infinity.
\end{lem}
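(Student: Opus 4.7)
The plan is to condition on $\mathcal{E}_i=\sigma(\mathbf{a}_1,\ldots,\mathbf{a}_i)$, under which $P_i$ is deterministic and $\gamma_{i+1}^2=\mathbf{a}_{i+1}^T P_i \mathbf{a}_{i+1}$ is a centered quadratic form in an independent vector. We decompose
\begin{equation*}
\gamma_{i+1}^2-(n-i)=\underbrace{\sum_{k}P_i(k,k)(a_{i+1,k}^2-1)}_{Q_1^{(i)}}+\underbrace{2\sum_{j<k}P_i(j,k)\,a_{i+1,j}\,a_{i+1,k}}_{Q_2^{(i)}}
\end{equation*}
into diagonal and off-diagonal parts and treat them by different moment techniques. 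By a union bound, it is enough to show that for $\ell=1,2$, $\sum_{i=0}^{n-s_1}\mathbb{P}(|Q_\ell^{(i)}|/(n-i)\ge \epsilon\sqrt{\log n}/2)\to 0$.

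For the off-diagonal piece, write $Q_2^{(i)}=\sum_k D_k$ with $D_k=2a_{i+1,k}\sum_{j<k}P_i(j,k)a_{i+1,j}$, a martingale difference sequence in the coordinate index $k$. Burkholder's inequality with $p=4$ gives $\mathbb{E}[|Q_2^{(i)}|^4\mid\mathcal{E}_i]\le C\,\mathbb{E}\bigl[(\sum_k D_k^2)^2\bigm|\mathcal{E}_i\bigr]$. Using the projection identity $\sum_{j<k}P_i(j,k)^2\le P_i(k,k)\le 1$ together with only the uniform $4$th moment of the entries, a Cauchy--Schwarz-type estimate bounds the right-hand side by $C(n-i)^2$. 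The fourth-moment Chebyshev inequality then yields $\mathbb{P}(|Q_2^{(i)}|/(n-i)\ge \epsilon\sqrt{\log n}/2)\le C/((n-i)^2\log^2 n\,\epsilon^4)$, and summing over $i$ with $n-i\ge s_1=\log^{3a}n$ gives $O(1/(s_1\log^2 n))=o(1)$.

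For the diagonal piece, $Q_1^{(i)}$ is a sum of independent centered variables with conditional variance $\sum_k P_i(k,k)^2(\mathbb{E}a_{i+1,k}^4-1)\le C\sum_k P_i(k,k)^2$. The naive estimate $\sum_k P_i(k,k)^2\le n-i$ together with Chebyshev only gives a total sum of order $1$, so sharper control of $\sum_k P_i(k,k)^2$ is needed. Using the Sherman--Morrison identity $P_i(k,k)=1/(1+\mathbf{c}_k^T B_k^{-1}\mathbf{c}_k)$, where $\mathbf{c}_k$ denotes the $k$-th column of $A_{(i)}$ and $B_k=\sum_{k'\ne k}\mathbf{c}_{k'}\mathbf{c}_{k'}^T$ is independent of $\mathbf{c}_k$, one shows by concentration of the quadratic form $\mathbf{c}_k^T B_k^{-1}\mathbf{c}_k$ around $\mathrm{tr}(B_k^{-1})\sim i/(n-1)$ that $\sum_k P_i(k,k)^2\le C(n-i)^2/n$ holds on an event $\mathcal{G}_i$ of probability close to $1$. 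On $\mathcal{G}_i$, Chebyshev gives $\mathbb{P}(|Q_1^{(i)}|/(n-i)\ge \epsilon\sqrt{\log n}/2\mid\mathcal{E}_i)\le C/(n\log n)$, summing over $i$ to $O(1/\log n)=o(1)$; the contribution of $\mathcal{G}_i^c$ is absorbed into the error by a separate concentration bound.

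The main obstacle is the sharp delocalization bound $\sum_k P_i(k,k)^2\le C(n-i)^2/n$ for the diagonal of $P_i$, which must hold uniformly in $i$ with sufficiently high probability. Under only the fourth-moment hypothesis this is delicate and requires a careful random-matrix estimate on the hat matrix of the first $i$ rows of $A_n$; it is the most involved ingredient of the argument.
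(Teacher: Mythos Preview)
Your decomposition into diagonal and off-diagonal quadratic forms, with separate moment bounds on each, is exactly the paper's route. For the off-diagonal piece, your Burkholder bound $\mathbb{E}[|Q_2^{(i)}|^4\mid\mathcal{E}_i]\le C(n-i)^2$ is correct and equivalent to the paper's direct computation (Lemma~\ref{lem.3.1.1}) via $trP_i^2=n-i$; the resulting sum is $\mathcal{O}(s_1^{-1}\log^{-2}n)$ as you say.

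The diagonal piece has a real gap. First, $tr(B_k^{-1})\sim i/(n-1)$ is wrong: the correct Marchenko--Pastur heuristic gives $tr(B_k^{-1})\sim i/(n-i)$, which blows up as $i\to n$, with $tr(B_k^{-2})\sim n^2/(n-i)^3$. Your sharp delocalization $\sum_k P_i(k,k)^2\le C(n-i)^2/n$ then requires $\mathbf{c}_k^TB_k^{-1}\mathbf{c}_k$ to sit within a constant factor of $i/(n-i)$ \emph{for every $k$ simultaneously}. With only fourth moments, second-moment Chebyshev on this quadratic form gives a per-$k$ failure probability of order $1/(n-i)$; after a union bound over $n$ columns this is of order $n/(n-i)$, which is useless once $n-i=s_1=\log^{3a}n$. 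A fourth-moment Chebyshev on the quadratic form would need eighth moments of the entries, which are not assumed. So the ``separate concentration bound'' absorbing $\mathcal{G}_i^c$ cannot be supplied as written, and indeed the paper never proves the strong bound you aim for.

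The paper sidesteps this by splitting at $s_2=\lfloor n\log^{-20a}n\rfloor$. For $n-i\ge s_2$ the trivial inequality $p_{kk}(i)\le 1$ already yields $\sum_k q_{kk}(i)^2\le 1/(n-i)$, and these terms sum to $\mathcal{O}(\log\log n)$. For $s_1\le n-i\le s_2$ the paper proves only the much weaker statement $\mathbb{E}\max_k p_{kk}(i)\le C\log^{-8a}n$ (inequality~(\ref{3.7})), and even that requires two devices you omit: a regularization $B_k^{-1}\to(\tfrac1nB_k+\alpha I_i)^{-1}$ with $\alpha=n^{-1/6}$, so the operator norm is capped at $n^{1/6}$ regardless of how small $n-i$ is; and a truncation of the entries at level $\log^a n$, manufacturing eighth moments of size $\mathcal{O}(\log^{4a}n)$ so that the fourth-moment deviation inequality of Lemma~\ref{lem.3.1.1} becomes applicable. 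These are precisely the ``careful random-matrix estimates'' you flag as delicate but do not carry out.
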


\begin{pf} Below we use the notation
\begin{eqnarray*}
Q_{i}=\bigl[q_{jk}(i)\bigr]_{n,n}=:
\frac{1}{n-i}P_i.
\end{eqnarray*}
Thus by definition and the fact that $\tr Q_i=1$ we have
\begin{eqnarray*}
X_{i+1}=\mathbf{a}_{i+1}^TQ_i
\mathbf{a}_{i+1}-1=\sum_{k}q_{kk}(i)
\bigl(a_{i+1,k}^2-1\bigr)+\sum_{u\neq v}q_{uv}(i)a_{i+1,u}a_{i+1,v}.
\end{eqnarray*}
Now we introduce the quantities
\begin{eqnarray*}
U_{i+1}=\sum_{k}q_{kk}(i)
\bigl(a_{i+1,k}^2-1\bigr),\qquad  V_{i+1}=\sum
_{u\neq
v}q_{uv}(i)a_{i+1,u}a_{i+1,v}.
\end{eqnarray*}
Obviously
\begin{eqnarray*}
|X_{i+1}|\leq|U_{i+1}|+|V_{i+1}|.
\end{eqnarray*}
Then it is elementary to see
\begin{eqnarray*}
\mathbb{P}\biggl\{\frac{1}{\sqrt{\log n}}|X_{i+1}|\geq\epsilon\biggr\}\leq
\mathbb{P}\biggl\{\frac{1}{\sqrt{\log n}}|U_{i+1}|\geq\frac{\epsilon
}{2}\biggr
\}+\mathbb{P}\biggl\{\frac{1}{\sqrt{\log n}}|V_{i+1}|\geq\frac
{\epsilon}{2}
\biggr\}.
\end{eqnarray*}
Therefore, it suffices to verify the following two statements instead:
\begin{eqnarray*}
\sum_{i=0}^{n-s_1}\mathbb{P}\biggl\{
\frac{1}{\sqrt{\log n}}|U_{i+1}|\geq \frac{\epsilon}{2}\biggr\}
\longrightarrow0
\end{eqnarray*}
and
\begin{eqnarray*}
\sum_{i=0}^{n-s_1}\mathbb{P}\biggl\{
\frac{1}{\sqrt{\log n}}|V_{i+1}|\geq \frac{\epsilon}{2}\biggr\}
\longrightarrow0,
\end{eqnarray*}
which can be implied by
%
\begin{eqnarray}\label{3.4}
\frac{1}{\log n}\sum_{i=0}^{n-s_1}
\mathbb{E}U_{i+1}^2\longrightarrow 0,
\end{eqnarray}
and
%
\begin{eqnarray}\label{3.5}
\frac{1}{\log^2n}\sum_{i=0}^{n-s_1}\mathbb
{E}V_{i+1}^{4}\longrightarrow0.
\end{eqnarray}

First, we verify (\ref{3.4}). In the sequel, we set
\[
s_2=\bigl\lfloor n\log^{-20a}n\bigr\rfloor.
\]
By definition, we see
\begin{eqnarray*}
\mathbb{E}U_{i+1}^2=\mathbb{E}\biggl(\sum
_{k}q_{kk}(i) \bigl(a_{i+1,k}^2-1
\bigr)\biggr)^2\leq C\mathbb{E}\sum_{k}q_{kk}^2(i).
\end{eqnarray*}
Using the basic fact $0\leq p_{kk}(i)\leq1$ one has $0\leq
q_{kk}(i)\leq1/(n-i)$. Taking this fact and $\tr Q_i=1$ into account we have
%
\begin{eqnarray}\label{0107.2}
\frac{1}{\log n}\sum_{i=0}^{n-s_1}
\mathbb{E}U_{i+1}^2 &\leq& C\frac{1}{\log n}\sum
_{i=0}^{n-s_1}\mathbb{E}\sum
_{k}q_{kk}^2(i)
\nonumber
\\
&\leq&C\Biggl(\frac{1}{\log n}\sum_{i=0}^{n-s_2}
\frac{1}{n-i}+\frac
{1}{\log n}\sum_{i=n-s_2}^{n-s_1}
\mathbb{E}\max_{k}q_{kk}(i)\Biggr)
\nonumber
\\
&\leq&C\frac{1}{\log n}\sum_{i=n-s_2}^{n-s_1}
\mathbb{E}\max_{k}q_{kk}(i)+\mathcal{O}\biggl(
\frac{\log\log n}{\log n}\biggr)
\\
&\leq&C\frac{1}{\log n}\sum_{i=n-s_2}^{n-s_1}
\frac{1}{n-i}\mathbb {E}\max_{k}p_{kk}(i)+
\mathcal{O}\biggl(\frac{\log\log n}{\log
n}\biggr)
\nonumber
\\
&\leq&C\max_{n-s_2\leq i\leq n-s_1}\mathbb{E}\max_{k}p_{kk}(i)+
\mathcal{O}\biggl(\frac{\log\log n}{\log n}\biggr). \nonumber
\end{eqnarray}
Now we need the following crucial technical lemma which will be used
repeatedly in the sequel.

\begin{lem} \label{lem.0107} Under the above notation, we have
%
\begin{eqnarray}
\max_{n-s_2\leq i\leq n-1}\mathbb{E}\max_{k}p_{kk}(i)
\leq C\log ^{-8a}n \label{3.7}
\end{eqnarray}
for some positive constant $C$.
\end{lem}

The proof of Lemma~\ref{lem.0107} will be given later. Now we proceed
to the proof of (\ref{3.4}) by assuming Lemma~\ref{lem.0107}. Note
that (\ref{3.4}) follows from (\ref{0107.2}) and (\ref{3.7})
immediately. Hence, it remains to show (\ref{3.5}). We need the
following simple deviation lemma for the quadratic form, whose proof is
quite elementary and will be given in Appendix \hyperref[appA]{A}.

\begin{lem} \label{lem.3.1.1} Suppose $x_i,i=1,\ldots,n$ are
independent real random variables with common mean zero and variance
$1$. Moreover, we assume $\max_{i}\mathbb{E}|x_i|^l\leq\nu_l$. Let
$M_n=(m_{ij})_{n,n}$ be a nonnegative definite matrix which is
deterministic. Then we have
%
\begin{eqnarray}\label{3.1.1.2}
\mathbb{E}\Biggl|\sum_{i=1}^n
m_{ii}x_i^2-\tr M_n\Biggr|^4
\leq C\bigl( \nu_8\tr M_n^4+\bigl(
\nu_4\tr M_n^2\bigr)^2\bigr)
\end{eqnarray}
and
%
\begin{eqnarray}\label{3.1.1.3}
\mathbb{E}\biggl|\sum_{u\neq v}m_{uv}x_ux_v\biggr|^4
\leq C\nu_4^2\bigl(\tr M_n^2
\bigr)^2
\end{eqnarray}
for some positive constant $C$.
\end{lem}

Note that by (\ref{3.1.1.3}) one has
\begin{eqnarray*}
\mathbb{E}V_{i+1}^4\leq C\mathbb{E}\bigl(\tr Q_i^2
\bigr)^2=C\frac{1}{(n-i)^2},
\end{eqnarray*}
which implies that
%
\begin{eqnarray}
\frac{1}{4\log^2n}\sum_{i=0}^{n-s_1}
\mathbb{E}V_{i+1}^{4}=\mathcal {O}\bigl(\log^{-2-3a}n
\bigr).
\end{eqnarray}
Thus (\ref{3.5}) holds. Then Lemma~\ref{lem.3.2.1} follows from (\ref
{3.4}) and (\ref{3.5}) immediately. Thus (\ref{3.1}) is verified.
\end{pf}
Now we prove Lemma~\ref{lem.0107}.
\begin{pf*}{Proof of Lemma~\ref{lem.0107}}
We denote the $j$th column of $A_{(i)}$ by $\mathbf{b}_j(i)$ and use
the notation $A_{(i,j)}$ to denote the matrix induced from $A_{(i)}$ by
deleting the $j$th column $\mathbf{b}_j(i)$. Moreover, we set the
positive parameter $\alpha=\alpha_n:=n^{-1/6}$. By (\ref{2.5}), we have
\begin{eqnarray*}
p_{kk}(i)&=&1-\mathbf{b}_k(i)^T
\bigl(A_{(i)}A_{(i)}^T\bigr)^{-1}\mathbf
{b}_k(i)
\nonumber
\\
&=&1-\mathbf{b}_k(i)^T\bigl(A_{(i,k)}A_{(i,k)}^T+
\mathbf{b}_{k}(i)\mathbf {b}_k(i)^T
\bigr)^{-1}\mathbf{b}_{k}(i)
\nonumber
\\
&\leq&1-\mathbf{b}_k(i)^T\bigl(A_{(i,k)}A_{(i,k)}^T+n
\alpha I_i+\mathbf {b}_{k}(i)\mathbf{b}_k(i)^T
\bigr)^{-1}\mathbf{b}_{k}(i)
\nonumber
\\
&=& \bigl(1+\mathbf{b}_k(i)^T\bigl(A_{(i,k)}A_{(i,k)}^T+n
\alpha I_i \bigr)^{-1}\mathbf{b}_k(i)
\bigr)^{-1},
\end{eqnarray*}
where in the last step we used the Sherman--Morrison formula
%
\begin{eqnarray}\label{muji111}
\bigl(M+\mathbf{b}_k(i)\mathbf{b}_k(i)^T
\bigr)^{-1}=M^{-1}-\frac{M^{-1}\mathbf
{b}_k(i)\mathbf{b}_k(i)^T M^{-1}}{1+\mathbf{b}_k(i)^TM^{-1}\mathbf
{b}_k(i)}
\end{eqnarray}
for $k\times k$ invertible matrix $M$.
Let
\begin{eqnarray*}
G_{(i,k)}(\alpha)=\biggl(\frac{1}{n}A_{(i,k)}A_{(i,k)}^T+
\alpha I_i\biggr)^{-1}, \qquad G_{(i)}(\alpha)=\biggl(
\frac{1}{n}A_{(i)}A_{(i)}^T+\alpha
I_i\biggr)^{-1}.
\end{eqnarray*}
Then one has
\begin{eqnarray*}
p_{kk}(i)\leq \biggl(1+\frac{1}{n}\mathbf{b}_k(i)^TG_{(i,k)}(
\alpha )\mathbf{b}_k(i) \biggr)^{-1}.
\end{eqnarray*}
Hence, to verify (\ref{3.7}) we only need to show
%
\begin{eqnarray}\label{3.3.3.3}
\mathbb{E}\max_{k} \biggl(1+\frac{1}{n}\mathbf
{b}_k(i)^TG_{(i,k)}(\alpha)\mathbf{b}_k(i)
\biggr)^{-1}\leq C\log ^{-8a}n , \qquad n-s_2\leq i\leq
n-1.
\end{eqnarray}
It is apparent that $G_{(i)}(\alpha)$ and $G_{(i,k)}(\alpha)$ are
positive-definite and
\[
\bigl\llVert G_{(i)}(\alpha)\bigr\rrVert _{\mathrm{op}},\bigl\llVert
G_{(i,k)}(\alpha)\bigr\rrVert _{\mathrm{op}}\leq\alpha^{-1}.
\]
Moreover, we have
%
\begin{eqnarray}\label{3.1.1.1}
&&\hspace*{-15pt}\bigl|\tr G_{(i)}(\alpha)-\tr G_{(i,k)}(\alpha)\bigr|
\nonumber
\\
&&\hspace*{-15pt}\quad =\biggl|\tr\biggl(\frac{1}nA_{(i,k)}A_{(i,k)}^T+
\frac{1}n\mathbf{b}_k(i)\mathbf {b}_k(i)^T+
\alpha I_i\biggr)^{-1}-\tr\biggl(\frac{1}nA_{(i,k)}A_{(i,k)}^T+
\alpha I_i\biggr)^{-1}\biggr|
\\
&&\hspace*{-15pt}\quad =\frac{\sklfrac{1}n\mathbf{b}_k(i)^TG_{(i,k)}(\alpha)^2\mathbf
{b}_{k}(i)}{1+\sklfrac{1}n\mathbf{b}_k(i)^TG_{(i,k)}(\alpha)\mathbf
{b}_{k}(i)}\leq\alpha^{-1},\nonumber
\end{eqnarray}
where in the second step above we used the Sherman--Morrison formula
(\ref{muji111}) again.
Now we set
\begin{eqnarray*}
\chi(i)=\mathbf{1}_{\{\sklfrac{1}n\tr G_{(i)}(\alpha)\geq\log^{10a}n\}},
\end{eqnarray*}
and we denote the $(u,v)$th entry of $G_{(i,k)}(\alpha)$ by
$G_{(i,k)}(u,v)$ below. Moreover, for ease of presentation, when there
is no confusion, we will omit the parameter $\alpha$ from the notation
$G_{(i,k)}(\alpha)$ and $G_{(i)}(\alpha)$.
Then we have for some small constant $0<\epsilon<1/2$,
%
\begin{eqnarray}\label{3.6}
&&\mathbb{E}\max_{k}\biggl(1+\frac{1}{n}
\mathbf{b}_k(i)^TG_{(i,k)}\mathbf
{b}_k(i)\biggr)^{-1}
\nonumber
\\
&&\quad \leq\mathbb{P}\biggl\{\frac{1}n\tr G_{(i)}\leq
\log^{10a}n\biggr\}+\mathbb {E}\chi(i)\max_{k}
\biggl(1+\frac{1}{n}\mathbf{b}_k(i)^TG_{(i,k)}
\mathbf {b}_k(i)\biggr)^{-1}
\nonumber
\\
&&\quad \leq \mathbb{P}\biggl\{\frac{1}n\tr G_{(i)}\leq
\log^{10a}n\biggr\}+\mathbb {E}\chi(i)\max_{k}
\Biggl(1+\frac{1}{n}\sum_{j=1}^iG_{(i,k)}(j,j)a_{jk}^2-
\epsilon\Biggr)^{-1}
\nonumber
\\
&&\qquad {}+\mathbb{P}\biggl\{\frac{1}n\max_{1\leq k\leq n}\biggl|\sum
_{1\leq u\neq v\leq
i}G_{(i,k)}(u,v)a_{uk}a_{vk}\biggr|
\geq\epsilon\biggr\}
\nonumber
\\
&&\quad \leq\mathbb{P}\biggl\{\frac{1}n\tr G_{(i)}\leq
\log^{10a}n\biggr\}+\mathbb {E}\chi(i)\max_k
\biggl(1+\log^{-a}n\cdot\frac{1}{n}\tr G_{(i,k)}-\epsilon
\biggr)^{-1}\nonumber
\\[-8pt]\\[-8pt]
&&\qquad {}+C\sum_{k=1}^n\mathbb{P}\Biggl\{\sum
_{j=1}^iG_{(i,k)}(j,j)a_{jk}^2<
\log ^{-a}n\cdot \tr G_{(i,k)},\frac{1}n\tr
G_{(i)}\geq\log^{10a}n\Biggr\}
\nonumber
\\[-1pt]
&&\qquad {}+\mathbb{P}\biggl\{\frac{1}n\max_{1\leq k\leq n}\biggl|\sum
_{1\leq u\neq v\leq
i}G_{(i,k)}(u,v)a_{uk}a_{vk}\biggr|
\geq\epsilon\biggr\}
\nonumber
\\[-1pt]
&&\quad \leq\mathbb{P}\biggl\{\frac{1}n\tr G_{(i)}\leq
\log^{10a}n\biggr\}+\mathbb {E}\chi(i) \biggl(1+\log^{-a}n\cdot
\frac{1}{n}\tr G_{(i)}-2\epsilon \biggr)^{-1}
\nonumber
\\[-1pt]
&&\qquad {}+C\sum_{k=1}^n\mathbb{P}\Biggl\{\sum
_{j=1}^iG_{(i,k)}(j,j)a_{jk}^2<
\log^{-a}n \tr G_{(i,k)},\frac{1}n\tr G_{(i)}\geq
\log^{10a}n\Biggr\}
\nonumber
\\[-1pt]
&&\qquad {}+\mathbb{P}\biggl\{\frac{1}n\max_{1\leq k\leq n}\biggl|\sum
_{1\leq u\neq v\leq
i}G_{(i,k)}(u,v)a_{uk}a_{vk}\biggr|
\geq\epsilon\biggr\},\nonumber
\end{eqnarray}
where in the above last inequality, we used (\ref{3.1.1.1}).
Below we will estimate (\ref{3.6}) term by term. To this end, we need
the following two lemmas, whose proofs will be given in Appendix \hyperref[appA]{A}.

\begin{lem} \label{lem.3.1} Under the assumption of Theorem~\ref
{thm.1.1}, for $n-s_2\leq i\leq n-1$, we have for $\alpha=n^{-1/6}$\vspace*{-1pt}
\begin{eqnarray*}
\mathbb{E}\biggl\{\frac{1}n\tr G_{(i)}(\alpha)\biggr
\}=s_i(\alpha)+\mathcal{O}\bigl(n^{-1/6}\bigr)
\end{eqnarray*}
and\vspace*{-1pt}
\begin{eqnarray*}
\Var\biggl\{\frac{1}n\tr G_{(i)}(\alpha)\biggr\}=\mathcal{O}
\bigl(n^{-1/3}\bigr),
\end{eqnarray*}
where\vspace*{-1pt}
\begin{eqnarray*}
s_i(\alpha)=2 \biggl(\alpha+1-\frac{i}{n}+\sqrt{\biggl[
\alpha+\biggl(1-\frac
{i}{n}\biggr)\biggr]^2+4\alpha
\frac{i}{n}} \biggr)^{-1}.
\end{eqnarray*}
\end{lem}

With the aid of Lemma~\ref{lem.3.1}, we can estimate the first term of
(\ref{3.6}) as follows. Note that by definition $s_i(\alpha)\geq
\frac{1}{10}\log^{20a}n$ for $n-s_2\leq i\leq n-1$, we have\vspace*{-1pt}
\begin{eqnarray*}
\mathbb{P}\biggl\{\frac{1}n\tr G_{(i)}\leq\log^{10a}n
\biggr\} &\leq& \mathbb{P}\biggl\{\biggl|\frac{1}n\tr G_{(i)}-
\mathbb{E}\frac{1}n\tr G_{(i)}\biggr|\geq\frac{1}{20}
\log^{20a} n\biggr\}
\nonumber
\\
&\leq&C\log^{-40a}n\Var\biggl\{\frac{1}n\tr G_{(i)}
\biggr\}=\mathrm{o}\bigl(n^{-1/3}\bigr).
\end{eqnarray*}
For the second term of (\ref{3.6}), with the definition of $\chi(i)$,
obviously one has
\begin{eqnarray*}
\mathbb{E}\chi(i) \biggl(1+\log^{-a}n\cdot\frac{1}{n}\tr G_{(i)}-2
\epsilon \biggr)^{-1}\leq C\log^{-8a}n.
\end{eqnarray*}
Now we deal with the third term of (\ref{3.6}). We set
\begin{eqnarray*}
\hat{a}_{jk}=a_{jk}\mathbf{1}_{\{|a_{jk}|\leq\log^{a}n\}},\qquad  \tilde
{a}_{jk}=\frac{\hat{a}_{jk}-\mathbb{E}\hat{a}_{jk}}{\sqrt{\Var\{
\hat{a}_{jk}\}}}.
\end{eqnarray*}
Since $G_{(i,k)}(j,j)$'s are positive and $\hat{a}_{jk}^2\leq
a_{jk}^2$ one has
\begin{eqnarray*}
&&\sum_{k=1}^n\mathbb{P}\Biggl\{\sum
_{j=1}^iG_{(i,k)}(j,j)a_{jk}^2<
\log ^{-a}n\cdot \tr G_{(i,k)},\frac{1}n\tr
G_{(i)}\geq\log^{10a}n\Biggr\}
\nonumber
\\
&&\quad \leq\sum_{k=1}^n\mathbb{P}\biggl\{\sum
_{j}G_{(i,k)}(j,j)\hat {a}_{jk}^2<
\log^{-a}n\cdot \tr G_{(i,k)},\frac{1}n\tr
G_{(i)}\geq\log^{10a}n\biggr\}.
\end{eqnarray*}
Moreover, by the assumption $\sup_n\max_{ij}\mathbb
{E}a_{ij}^4<\infty$ it is easy to derive that\vspace*{-1pt}
\begin{eqnarray*}
\mathbb{E}\hat{a}_{jk}=\mathcal{O}\bigl(\log^{-3a}n\bigr),\qquad
\Var\{\hat {a}_{jk}\}=1+\mathcal{O}\bigl(\log^{-2a}n\bigr).
\end{eqnarray*}
Consequently,\vspace*{-1pt}
\begin{eqnarray*}
\tilde{a}_{jk}=\hat{a}_{jk}+\mathcal{O}\bigl(
\log^{-a}n\bigr),
\end{eqnarray*}
which implies\vspace*{-1pt}
\begin{eqnarray*}
\tilde{a}_{jk}^2\leq2\hat{a}_{jk}^2+
\mathcal{O}\bigl(\log^{-2a}n\bigr)\leq 2\hat{a}_{jk}^2+
\log^{-a}n
\end{eqnarray*}
for sufficiently large $n$.
Therefore, we have
\begin{eqnarray*}
&&\sum_{k=1}^n\mathbb{P}\biggl\{\sum
_{j}G_{(i,k)}(j,j)a_{jk}^2<
\log ^{-a}n\cdot \tr G_{(i,k)},\frac{1}n\tr
G_{(i)}\geq\log^{10a}n\biggr\}
\nonumber
\\
&&\quad \leq\sum_{k=1}^n\mathbb{P}\biggl\{\sum
_{j}G_{(i,k)}(j,j)\hat {a}_{jk}^2<
\log^{-a}n\cdot \tr G_{(i,k)},\frac{1}n\tr
G_{(i)}\geq\log^{10a}n\biggr\}
\nonumber
\\
&&\quad \leq\sum_{k=1}^n\mathbb{P}\biggl\{\sum
_{j}G_{(i,k)}(j,j)\tilde
{a}_{jk}^2<3\log^{-a}n\cdot \tr G_{(i,k)},
\frac{1}n\tr G_{(i)}\geq\log^{10a}n\biggr\}
\nonumber
\\
&&\quad \leq\sum_{k=1}^n\mathbb{P}\biggl\{\biggl|
\sum_{j}G_{(i,k)}(j,j)\tilde{a}_{jk}^2-
\tr G_{(i,k)}\biggr|\geq\frac{1}2\tr G_{(i,k)},
\frac{1}n\tr G_{(i)}\geq\log ^{10a}n\biggr\}
\nonumber
\\
&&\quad \leq\sum_{k=1}^n\mathbb{P}\biggl\{
\frac{1}n\biggl|\sum_{j}G_{(i,k)}(j,j)
\tilde {a}_{jk}^2- \tr G_{(i,k)}\biggr|\geq
\frac{1}4\log^{10a}n\biggr\}
\nonumber
\\
&&\quad \leq C\log^{-40a}n\cdot n^{-4}\sum
_{k=1}^n\mathbb{E}\bigl[\log ^{4a}n\cdot
\tr G_{(i,k)}^{4}+\bigl(\tr G_{i,k}^2
\bigr)^{2}\bigr]
\nonumber
\\
&&\quad \leq\mathrm{o}\biggl(\frac{1}{n \alpha^{4}}\biggr).
\end{eqnarray*}
In the fourth inequality we used the fact (\ref{3.1.1.1}) and in the
fifth inequality we used (\ref{3.1.1.2}) and the fact $\mathbb
{E}|\tilde{a}_{ij}|^{4+t}=\mathcal{O}(\log^{ta}n)$ for any $t\geq
0$, which is easy to see from the definition of $\tilde{a}_{ij}$.
Now we begin to deal with the last term of (\ref{3.6}). Note that by
(\ref{3.1.1.3})
\begin{eqnarray*}
&&\mathbb{P}\biggl\{\frac{1}n\max_{1\leq k\leq n}\biggl|\sum
_{u\neq
v}G_{(i,k)}(u,v)a_{uk}a_{vk}\biggr|
\geq\epsilon\biggr\}
\nonumber
\\
&&\quad \leq\sum_{k=1}^n\mathbb{P}\biggl\{
\frac{1}n\biggl|\sum_{u\neq
v}G_{(i,k)}(u,v)a_{uk}a_{vk}\biggr|
\geq\epsilon\biggr\}
\nonumber
\\
&&\quad \leq\epsilon^{-4}n^{-4}\sum_{k=1}^n
\mathbb{E}\biggl(\sum_{u\neq
v}G_{(i,k)}(u,v)a_{uk}a_{vk}
\biggr)^4
\nonumber
\\
&&\quad \leq C\epsilon^{-4}n^{-4}\sum_{k=1}^n
\mathbb{E}\bigl(\tr G_{(i,k)}^2\bigr)^2
\nonumber
\\
&&\quad \leq\mathcal{O}\biggl(\frac{1}{n\alpha^4}\biggr).
\end{eqnarray*}
Therefore, (\ref{3.3.3.3}) follows from the above estimates, so does
(\ref{3.7}). Hence, we complete the proof.
\end{pf*}

Now we come to deal with (\ref{3.2}) and (\ref{3.3}). Note that (\ref
{3.2}) can be implied by (ii) directly. Thus we will prove
the statement (ii) and (\ref{3.3}) below. We reformulate
them as the following lemma and then prove it.

\begin{lem} \label{lem.3.1.1.1}Under the Assumption \textup{\ref{assC}},
one has
%
\begin{eqnarray}\label{3.1.1}
\frac{\sum_{i=0}^{n-s_1}X_{i+1}^2-2\log n}{\sqrt{\log n}}\stackrel {\mathbb{P}} \longrightarrow0,
\end{eqnarray}
and
%
\begin{eqnarray}\label{3.1.2}
\frac{1}{2\log n}\sum_{i=0}^{n-s_1}
\mathbb{E}X_{i+1}^2=1+\mathrm {o}(1).
\end{eqnarray}
\end{lem}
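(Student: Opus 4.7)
The plan is to obtain an explicit closed form for the conditional second moment $\mathbb{E}[X_{i+1}^2\mid\mathcal{E}_i]$ and then peel off the associated martingale-difference sequence. Since $\mathbf{a}_{i+1}$ is independent of $\mathcal{E}_i$, $Q_i$ is $\mathcal{E}_i$-measurable, and $P_i$ is an orthogonal projection of rank $n-i$ (so that $trQ_i^2=1/(n-i)$), a direct second-moment calculation using the independence and unit variance of the entries of $\mathbf{a}_{i+1}$ gives
\begin{eqnarray*}
\mathbb{E}[X_{i+1}^2\mid\mathcal{E}_i]=\frac{2}{n-i}+\sum_{k=1}^n q_{kk}^2(i)(\mathbb{E}a_{i+1,k}^4-3).
\end{eqnarray*}
The first piece is deterministic and will produce the target $2\log n$; the second is a kurtosis correction whose absolute value is bounded by $K\max_k q_{kk}(i)$, thanks to $\sum_k q_{kk}(i)=1$ and the uniform fourth-moment hypothesis.

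For (\ref{3.1.2}), I take expectations and sum. The main piece yields $\sum_{i=0}^{n-s_1}2/(n-i)=2\log n+\mathcal{O}(\log\log n)$. For the correction I split at $i=n-s_2$: for $i\le n-s_2$ the deterministic bound $q_{kk}(i)\le 1/(n-i)$ produces $\mathcal{O}(\log\log n)$ after summation, while for $n-s_2\le i\le n-s_1$ the diagonal estimate (\ref{3.7}) already established in Lemma \ref{lem.3.2.1} contributes an additional $\mathcal{O}(\log^{1-8a}n)$. Together this yields (\ref{3.1.2}).

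For (\ref{3.1.1}) I decompose
\begin{eqnarray*}
\sum_{i=0}^{n-s_1}X_{i+1}^2-2\log n=\sum_{i=0}^{n-s_1}Y_{i+1}+\Bigl(\sum_{i=0}^{n-s_1}\mathbb{E}[X_{i+1}^2\mid\mathcal{E}_i]-2\log n\Bigr),
\end{eqnarray*}
where $Y_{i+1}:=X_{i+1}^2-\mathbb{E}[X_{i+1}^2\mid\mathcal{E}_i]$ is a martingale difference sequence with respect to $\{\mathcal{E}_i\}$. The second bracket has expected absolute value $\mathcal{O}(\log\log n)$ by the same argument as above, hence is $o(\sqrt{\log n})$ in probability by Markov. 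For the martingale sum I use orthogonality: $\mathbb{E}(\sum_i Y_{i+1})^2=\sum_i\mathbb{E}Y_{i+1}^2\le\sum_i\mathbb{E}X_{i+1}^4$, so Chebyshev reduces (\ref{3.1.1}) to the uniform bound $\sum_i\mathbb{E}X_{i+1}^4=o(\log n)$. Writing $X_{i+1}=U_{i+1}+V_{i+1}$ as in the proof of Lemma \ref{lem.3.2.1}, part (\ref{3.1.1.3}) of Lemma \ref{lem.3.1.1} directly gives $\mathbb{E}V_{i+1}^4\le C(n-i)^{-2}$, so $\sum_i\mathbb{E}V_{i+1}^4=\mathcal{O}(1)$.

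The main obstacle is bounding $\mathbb{E}U_{i+1}^4$, because applying (\ref{3.1.1.2}) with $m_{kk}=q_{kk}(i)$ requires the eighth moment $\nu_8$, which is not available under the fourth-moment hypothesis. I would handle this by the same truncation scheme used in Lemma \ref{lem.3.2.1}: pass from $a_{i+1,k}$ to the centered and standardized truncated variables $\tilde a_{i+1,k}$ at level $\log^a n$, control the tail outside the truncation window by Markov's inequality applied to the raw fourth moment, and apply (\ref{3.1.1.2}) to the truncated quadratic form with $\nu_8=\mathcal{O}(\log^{4a}n)$ and $\nu_4=\mathcal{O}(1)$. Combined with the elementary bounds $\sum_k q_{kk}^4(i)\le (n-i)^{-3}$ and $(\sum_k q_{kk}^2(i))^2\le(n-i)^{-2}$, both derived from $0\le q_{kk}(i)\le 1/(n-i)$ and $\sum_k q_{kk}(i)=1$, this yields $\mathbb{E}\tilde U_{i+1}^4\le C(\log^{4a}n\,(n-i)^{-3}+(n-i)^{-2})$, whose sum over $i\le n-s_1$ is $\mathcal{O}(\log^{4a-6a}n)+\mathcal{O}(\log^{-3a}n)=\mathcal{O}(\log^{-2a}n)$. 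Once $a$ is chosen large enough this is $o(\log n)$, the truncation remainder is negligible, and (\ref{3.1.1}) follows. The delicate balance is that the $(n-i)^{-3}$ decay of $\sum_k q_{kk}^4(i)$ must be strong enough to absorb the $\log^{4a}n$ produced by the truncated eighth moment, which is exactly what the choice $s_1=\lfloor\log^{3a}n\rfloor$ guarantees.
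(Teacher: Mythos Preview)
Your treatment of the conditional expectation $\mathbb{E}[X_{i+1}^2\mid\mathcal{E}_i]$, of (\ref{3.1.2}), and of the bracket $\sum_i\mathbb{E}[X_{i+1}^2\mid\mathcal{E}_i]-2\log n$ matches the paper exactly. The gap is in the martingale piece $\sum_i Y_{i+1}$.

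Under only a fourth-moment hypothesis, $\mathbb{E}X_{i+1}^4$ may be $+\infty$: expanding $U_{i+1}^4$ produces the diagonal term $\sum_k q_{kk}^4(i)\,\mathbb{E}(a_{i+1,k}^2-1)^4$, which needs $\mathbb{E}a_{i+1,k}^8$. Consequently $\mathbb{E}Y_{i+1}^2=\mathbb{E}X_{i+1}^4-\mathbb{E}\bigl(\mathbb{E}[X_{i+1}^2\mid\mathcal{E}_i]\bigr)^2$ may itself be $+\infty$, so the chain $\mathbb{E}(\sum_i Y_{i+1})^2=\sum_i\mathbb{E}Y_{i+1}^2\le\sum_i\mathbb{E}X_{i+1}^4$ is vacuous and Chebyshev gives nothing. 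Your truncation does not rescue this as written. Bounding $\mathbb{E}\tilde U_{i+1}^4$ is fine, but you then need $\sum_i(X_{i+1}^2-\tilde X_{i+1}^2)=o(\sqrt{\log n})$ in probability, and the phrase ``control the tail outside the truncation window by Markov'' does not deliver that: the union bound $\mathbb{P}(\max_{i\le n-s_1,\,k}|a_{i+1,k}|>\log^a n)\le Cn^2\log^{-4a}n$ diverges, and an $L^1$ estimate on $\sum_i|X_{i+1}^2-\tilde X_{i+1}^2|$ runs into error terms of size $O(\log^{-2a}n)$ per summand that do not decay in $i$, hence blow up when summed over $O(n)$ indices.

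The paper avoids the eighth moment altogether by a structural decomposition rather than truncation. It expands $Y_{i+1}=X_{i+1}^2-\mathbb{E}[X_{i+1}^2\mid\mathcal{E}_i]$ explicitly and writes it as $2W_1(i)+2W_2(i)$, isolating into $W_2(i)$ the single ``bad'' term $\tfrac12\sum_u q_{uu}^2(a_{i+1,u}^4-\mathbb{E}a_{i+1,u}^4)$ together with a cross term $U\cdot V$. These are controlled in $L^1$ (so only fourth moments are needed), giving $\sum_i\mathbb{E}|W_2(i)|=O(\log\log n)$. Everything left in $W_1(i)$ is a polynomial in the $a_{i+1,k}$ in which no single variable appears to a power higher than two; hence $\mathbb{E}W_1(i)^2$ involves at most fourth moments. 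Since $\{W_1(i)\}$ is still a martingale difference sequence, orthogonality plus the $L^2$ bound $\sum_i\mathbb{E}W_1(i)^2=O(\log\log n)$ finishes the argument. The point is not to bound $\mathbb{E}Y_{i+1}^2$, which may be infinite, but to split $Y_{i+1}$ into an $L^2$ part handled by orthogonality and an $L^1$ part handled by Markov.
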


\begin{pf} We begin with (\ref{3.1.1}). We split the proof of (\ref
{3.1.1}) into two steps.
%
\begin{eqnarray}\label{3.8}
\frac{\sum_{i=0}^{n-s_1}(X_{i+1}^2-\mathbb{E}\{X_{i+1}^2|\mathcal
{E}_i\})}{\sqrt{\log n}}\stackrel{\mathbb{P}}\longrightarrow0,
\end{eqnarray}
and
%
\begin{eqnarray}\label{3.9}
\frac{\sum_{i=0}^{n-s_1}\mathbb{E}\{X_{i+1}^2|\mathcal{E}_i\}-2\log
n}{\sqrt{\log n}}\stackrel{\mathbb{P}}\longrightarrow0.
\end{eqnarray}
Observe that
%
\begin{eqnarray}\label{3.12}
\mathbb{E}\bigl\{X_{i+1}^2|\mathcal{E}_{i}\bigr
\} &=&\mathbb{E}\Biggl\{\Biggl(\sum_{j,k=1}^nq_{jk}(i)a_{i+1,j}a_{i+1,k}-1
\Biggr)^2\Bigl|\mathcal{E}_i\Biggr\}
\nonumber
\\[-8pt]\\[-8pt]
&=&\frac{2}{n-i}+\sum_{k}q_{kk}(i)^2
\bigl(\mathbb{E}|a_{i+1,k}|^4-3\bigr). \nonumber
\end{eqnarray}
Therefore, to verify (\ref{3.9}), we only need
%
\begin{eqnarray}\label{3.13}
\frac{1}{\sqrt{\log n}}\mathbb{E}\sum_{i=0}^{n-s_1}
\sum_{k}q_{kk}(i)^2
\longrightarrow0.
\end{eqnarray}
Note that from the proof of (\ref{3.4}) we can get the following
estimate directly.
%
\begin{eqnarray}\label{3.3.3.3.3}
\frac{1}{\sqrt{\log n}}\mathbb{E}\sum_{i=0}^{n-s_1}
\sum_{k}q_{kk}(i)^2=
\mathcal{O}\biggl(\frac{\log\log n}{\sqrt{\log n}}\biggr).
\end{eqnarray}
Thus it suffices to show (\ref{3.8}). By elementary calculations, we have
\begin{eqnarray*}
&&X_{i+1}^2-\mathbb{E}\bigl\{X_{i+1}^2|
\mathcal{E}_i\bigr\}
\nonumber
\\
&&\quad =-2\sum_{u}q_{uu}(i)
\bigl(a_{i+1,u}^2-1\bigr)+2\sum_{u\neq
v}q_{uu}(i)q_{vv}(i)
\bigl(a_{i+1,u}^2a_{i+1,v}^2-1\bigr)
\nonumber
\\
&&\qquad {}+2\sum_{u\neq v}q_{uv}(i)^2
\bigl(a_{i+1,u}^2a_{i+1,v}^2-1\bigr)
\nonumber
\\
&&\qquad {}+2\mathop{\sum_{u_1\neq v_1,u_2\neq v_2}}_{\{u_1,v_1\}\neq\{
u_2\neq v_2\}
}q_{u_1v_1}(i)q_{u_2v_2}(i)a_{i+1,u_1}a_{i+1,v_1}a_{i+1,u_2}a_{i+1,v_2}
\nonumber
\\
&&\qquad {}+\sum_{u}q_{uu}(i)^2
\bigl(a_{i+1,u}^4-\mathbb{E}a_{i+1,u}^4
\bigr)+2\biggl(\sum_{u}q_{uu}(i)
\bigl(a_{i+1,u}^2-1\bigr)\biggr) \biggl(\sum
_{u\neq
v}q_{uv}(i)a_{i+1,u}a_{i+1,v}
\biggr)
\nonumber
\\
&&\quad =:2W_1(i)+2W_2(i),
\end{eqnarray*}
where\vspace*{2pt}
\begin{eqnarray*}
W_1(i)&=&-\sum_{i}q_{uu}(i)
\bigl(a_{i+1,u}^2-1\bigr)+\sum_{u\neq
v}q_{uu}(i)q_{vv}(i)
\bigl(a_{i+1,u}^2a_{i+1,v}^2-1\bigr)
\nonumber
\\[1.5pt]
&&{}+\sum_{u\neq v}q_{uv}(i)^2
\bigl(a_{i+1,u}^2a_{i+1,v}^2-1\bigr)
\nonumber
\\[1.5pt]
&&{}+\mathop{\sum_{u_1\neq v_1,u_2\neq v_2}}_{\{u_1,v_1\}\neq\{u_2\neq
v_2\}}q_{u_1v_1}(i)q_{u_2v_2}(i)a_{i+1,u_1}a_{i+1,v_1}a_{i+1,u_2}a_{i+1,v_2},
\end{eqnarray*}
and\vspace*{2pt}
\begin{eqnarray*}
W_2(i)&=&\frac{1}2\sum_{u}q_{uu}(i)^2
\bigl(a_{i+1,u}^4-\mathbb {E}a_{i+1,u}^4
\bigr)
\nonumber
\\[1.5pt]
&&{}+\biggl(\sum_{u}q_{uu}(i)
\bigl(a_{i+1,u}^2-1\bigr)\biggr) \biggl(\sum
_{u\neq
v}q_{uv}(i)a_{i+1,u}a_{i+1,v}
\biggr).
\end{eqnarray*}
We split the issue to show\vspace*{2pt}
%
\begin{eqnarray}\label{3.10}
\frac{1}{\sqrt{\log n}}\sum_{i=0}^{s_1}W_1(i)
\stackrel{\mathbb {P}}\longrightarrow0,\qquad  \frac{1}{\sqrt{\log n}}\sum
_{i=0}^{s_1}W_2(i)\stackrel{\mathbb{P}}
\longrightarrow0.
\end{eqnarray}
First, we deal with the second statement of (\ref{3.10}). Note that\vspace*{2pt}
%
\begin{eqnarray}\label{3.11}
&&\frac{1}{\sqrt{\log n}}\sum_{i=0}^{n-s_1}
\mathbb{E}\bigl|W_2(i)\bigr|\nonumber\\[1.5pt]
&&\quad \leq C\frac{1}{\sqrt{\log n}}\sum
_{i=0}^{n-s_1}\mathbb{E}\sum
_{u}q_{uu}(i)^2\nonumber
\\[-7pt]\\[-7pt]
&&\qquad {}+C\frac{1}{\sqrt{\log n}}\sum_{i=0}^{n-s_1}
\biggl(\mathbb{E}\biggl(\sum_{u}q_{uu}(i)
\bigl(a_{i+1,u}^2-1\bigr)\biggr)^2
\biggr)^{1/2}\biggl(\mathbb{E}\biggl(\sum_{u\neq
v}q_{uv}(i)a_{i+1,u}a_{i+1,v}
\biggr)^2\biggr)^{1/2}
\nonumber
\\[1.5pt]
&&\quad \leq C\frac{1}{\sqrt{\log n}}\sum_{i=0}^{n-s_1}
\mathbb{E}\sum_{u}q_{uu}(i)^2+C
\frac{1}{\sqrt{\log n}}\sum_{i=0}^{n-s_1}\biggl(
\frac
{1}{n-i}\biggr)^{1/2}\biggl(\mathbb{E}\sum
_{u}q_{uu}(i)^2\biggr)^{1/2}.
\nonumber
\end{eqnarray}
By (\ref{3.3.3.3.3}), we have that the first term of (\ref{3.11}) is
of the order of $\mathcal{O}(\log\log n/\sqrt{\log n})$. For the
second term, by using (\ref{3.7}) we have
\begin{eqnarray*}
&&\frac{1}{\sqrt{\log n}}\sum_{i=0}^{n-s_1}\biggl(
\frac
{1}{n-i}\biggr)^{1/2}\biggl(\mathbb{E}\sum
_{u}q_{uu}(i)^2\biggr)^{1/2}
\nonumber
\\
&&\quad \leq\frac{1}{\sqrt{\log n}}\sum_{i=0}^{n-s_2}
\frac{1}{n-i}+\frac
{1}{\sqrt{\log n}}\sum_{i=n-s_2}^{n-s_1}
\biggl(\frac{1}{n-i}\mathbb {E}\sum_{u}q_{uu}(i)^2
\biggr)^{1/2}
\nonumber
\\
&&\quad \leq\frac{1}{\sqrt{\log n}}\sum_{i=n-s_2}^{n-s_1}
\frac
{1}{n-i}\Bigl(\mathbb{E}\max_u
p_{uu}(i)\Bigr)^{1/2}+\mathcal{O}\biggl(\frac{\log
\log n}{\sqrt{\log n}}
\biggr)
\nonumber
\\
&&\quad =\mathcal{O}\biggl(\frac{\log\log n}{\sqrt{\log n}}\biggr).
\end{eqnarray*}
Now we consider the first term of (\ref{3.10}). It is easy to see
\begin{eqnarray*}
\mathbb{E}\bigl\{W_1(i)\bigr\}=0,\qquad  \mathbb{E}\bigl
\{W_1(i)W_1(j)\bigr\}=0, \qquad i\neq j,
\end{eqnarray*}
which yields
%
\begin{eqnarray}\label{3.2.2.2}
\mathbb{E}\Biggl(\frac{1}{\sqrt{\log n}}\sum_{i=0}^{s_1}W_1(i)
\Biggr)^2&=&\frac
{1}{\log n}\sum_{i=0}^{s_1}
\mathbb{E}W_1(i)^2
\nonumber
\\
&\leq&\frac{C}{\log n}\sum_{i=0}^{n-s_1}
\mathbb{E}\sum_{u}q_{uu}(i)^2+
\frac{C}{\log n}\sum_{i=0}^{n-s_1}\mathbb{E}
\sum_{u\neq v, u\neq w}q_{uu}(i)^2q_{vv}(i)q_{ww}(i)
\nonumber
\\[-8pt]\\[-8pt]
&&{}+\frac{C}{\log n}\sum_{i=0}^{n-s_1}
\mathbb{E}\sum_{u\neq v, u\neq
w}q_{uv}(i)^2q_{uw}(i)^2
\nonumber
\\
&&{}+\frac{C}{\log n}\sum_{i=0}^{n-s_1}
\mathbb{E}\sum_{u_1\neq
v_1,u_2\neq v_2}\bigl|q_{u_1v_1}(i)q_{u_1v_2}(i)q_{u_2v_1}(i)q_{u_2v_2}(i)\bigr|.
\nonumber
\end{eqnarray}
The estimation of (\ref{3.2.2.2}) is elementary but somewhat tedious.
In fact, one can find the estimate towards every term of (\ref
{3.2.2.2}) in Nguyen and Vu \cite{NV} (see the estimation of $\Var
(\sum_{i=0}^{n-s_1}Y_{i+1})$ in Section~6 of Nguyen and Vu \cite{NV}).
Here we omit the
details and claim the following estimation
\begin{eqnarray*}
\mathbb{E}\Biggl(\frac{1}{\sqrt{\log n}}\sum_{i=0}^{s_1}W_1(i)
\Biggr)^2=\mathcal{O}\biggl(\frac{\log\log n}{\log n}\biggr).
\end{eqnarray*}
Then (\ref{3.8}) follows, so does (\ref{3.1.1}). Moreover, it is easy
to see that (\ref{3.1.2}) holds by combining (\ref{3.12}) and (\ref
{3.13}). Therefore, Lemma~\ref{lem.3.1.1.1} is proved.
\end{pf}
Thus we complete the proof of (i) and (ii).

\section{Negligible parts (iii) and (iv)}\label{sec4}
In this section, we will prove the statements (iii) and
(iv).
We start with (iii). The following elementary but crucial
lemma will be needed.

\begin{lem} \label{lem.3.3}By the definitions above, if $X_{i+1}\geq
-1+\log^{-\sfrac{a}{2}}n$, one has
\begin{eqnarray*}
|R_{i+1}|\leq C\bigl(U_{i+1}^2+|V_{i+1}|^{2+\delta}
\bigr)\log\log n
\end{eqnarray*}
for any $0\leq\delta\leq1$. Here $C:=C(a,\delta)$ is a positive
constant only depends on $a$ and $\delta$.
\end{lem}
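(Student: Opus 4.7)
Define $\phi(x) := \log(1+x) - x + x^2/2$ so that $R_{i+1} = \phi(X_{i+1})$. The identity $\phi'(x) = x^2/(1+x)$, together with the hypothesis $X_{i+1} \geq -1 + \log^{-a/2}n$, will yield three elementary pointwise bounds: (a) $|\phi(x)| \leq (2/3)|x|^3$ for $|x| \leq 1/2$, obtained by integrating $\phi'$ and using $1+t \geq 1/2$ in the integrand; (b) $|\phi(x)| \leq C(1 + x^2)$ for $x \geq 1/2$, obtained from $0 \leq \log(1+x) \leq x$; and (c) $|\phi(x)| \leq C\log\log n$ for $x \in [-1 + \log^{-a/2}n, -1/2]$, where one invokes $|\log(1+x)| \leq (a/2)\log\log n$ together with boundedness of $|x|$ and $x^2$. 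The $\log\log n$ factor appearing in the lemma is forced precisely by regime (c), the delicate one near $-1$.

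Next I would combine these bounds with the decomposition $X_{i+1} = U_{i+1} + V_{i+1}$ through a split on $|X_{i+1}|$. In Case~I, $|X_{i+1}| \leq 1/2$, I would first handle the sub-case $|U_{i+1}| \geq 1/2$, where the deterministic bound $|R_{i+1}| \leq (2/3)|X_{i+1}|^3 \leq 1/12$ is absorbed into $U_{i+1}^2 \geq 1/4$. In the remaining sub-case $|U_{i+1}| < 1/2$, the triangle inequality forces $|V_{i+1}| \leq 1$, whence $|U_{i+1}|^3 \leq U_{i+1}^2/2$ and $|V_{i+1}|^3 \leq |V_{i+1}|^{2+\delta}$ (the latter because $0 \leq \delta \leq 1$); the cubic expansion $|X_{i+1}|^3 \leq 4(|U_{i+1}|^3 + |V_{i+1}|^3)$ then delivers the conclusion, in fact without the $\log\log n$ factor in this case.

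For Case~II, $|X_{i+1}| > 1/2$, bounds (b) and (c) together give $|R_{i+1}| \leq C(1 + X_{i+1}^2)\log\log n$ for large $n$. The inequality $X_{i+1}^2 \leq 2(U_{i+1}^2 + V_{i+1}^2)$ combined with $|X_{i+1}| > 1/2$ yields $U_{i+1}^2 + V_{i+1}^2 > 1/8$, which absorbs the additive $1$ and reduces matters to $V_{i+1}^2 \leq C(U_{i+1}^2 + |V_{i+1}|^{2+\delta})$. This I would obtain by a three-way split on $|V_{i+1}|$: if $|V_{i+1}| \geq 1$ then $V_{i+1}^2 \leq |V_{i+1}|^{2+\delta}$ directly; if $|V_{i+1}| \in [1/4,1)$ then $|V_{i+1}|^{2+\delta} \geq (1/4)^3$ is bounded below by a positive constant that absorbs $V_{i+1}^2 \leq 1$; and if $|V_{i+1}| < 1/4$ then $|X_{i+1}| > 1/2$ forces $|U_{i+1}| > 1/4$, so $V_{i+1}^2 < U_{i+1}^2$.

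I do not foresee any serious obstacle here; the proof is really careful case-splitting on the triple $(|X_{i+1}|,|U_{i+1}|,|V_{i+1}|)$. The only subtle ingredient is the regime $X_{i+1}$ close to $-1$, where neither the cubic Taylor remainder nor the quadratic bound works and one must instead rely on the logarithmic blow-up $|\log(1+X_{i+1})| \leq (a/2)\log\log n$; this is exactly what motivates the hypothesis $X_{i+1} \geq -1 + \log^{-a/2}n$ in the statement and dictates the presence of $\log\log n$ on the right hand side.
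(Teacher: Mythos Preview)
Your proposal is correct and follows essentially the same approach as the paper's proof: both split according to the size of $X_{i+1}$, use the Taylor (cubic) bound when $|X_{i+1}|$ is small, a quadratic bound when $X_{i+1}$ is large and positive, and the crude $\log\log n$ bound near $-1$, then exploit that $|X_{i+1}|$ being bounded below forces at least one of $|U_{i+1}|,|V_{i+1}|$ to be bounded below. The only differences are cosmetic---the paper cuts at $|X_{i+1}|=1-\epsilon$ rather than $1/2$ and organizes the sub-cases slightly differently---while your write-up is somewhat more explicit in tracking the three-way split on $|V_{i+1}|$ in the large-$|X_{i+1}|$ regime.
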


\begin{pf}
We split the discussion into three cases. Choose some small constant
$0<\epsilon<\frac{1}{10}$ (say) and consider the three cases
$|X_{i+1}|\leq1-\epsilon$, $X_{i+1}>1-\epsilon$ and $-1+\log
^{-\sfrac{a}{2}}n\leq X_{i+1}<-1+\epsilon$ separately. For the first
case, we can use the elementary Taylor expansion to see that
\begin{eqnarray*}
|R_{i+1}|\leq C|X_{i+1}|^3\leq
C|U_{i+1}+V_{i+1}|^{2+\delta}
\end{eqnarray*}
for any $0\leq\delta\leq1$. If $|U_{i+1}|\geq1$ or $|V_{i+1}|\geq
1$, we will immediately get
\begin{eqnarray*}
|R_{i+1}|\leq C \bigl(U_{i+1}^2+|V_{i+1}|^{2+\delta}
\bigr)
\end{eqnarray*}
since $|U_{i+1}+V_{i+1}|<1$. If both $|U_{i+1}|$ and $|V_{i+1}|$ are
less than $1$, we have
\begin{eqnarray*}
|R_{i+1}|\leq C\bigl(|U_{i+1}|^{2+\delta}+|V_{i+1}|^{2+\delta}
\bigr)\leq C\bigl(U_{i+1}^{2}+|V_{i+1}|^{2+\delta}
\bigr).
\end{eqnarray*}
Now we come to deal with the second case. When $X_{i+1}>1-\epsilon$,
obviously one has
\begin{eqnarray*}
|R_{i+1}|\leq C(U_{i+1}+V_{i+1})^2.
\end{eqnarray*}
Then it is elementary to see that we always can find some positive
constant $C$ such that
\begin{eqnarray*}
|R_{i+1}|\leq C\bigl(U_{i+1}^2+|V_{i+1}|^{2+\delta}
\bigr)
\end{eqnarray*}
since $\max\{U_{i+1},V_{i+1}\}>\frac{1}2-\frac{\epsilon}{2}$.
Finally, we deal with the last case. Note that when $-1+\log^{-c}n\leq
X_{i+1}<-1+\epsilon$, we have
\begin{eqnarray*}
|R_{i+1}|\leq C\log\log n.
\end{eqnarray*}
Moreover, it is obvious that we have $\max\{|U_{i+1}|,|V_{i+1}|\}>
\frac{1}2-\frac{\epsilon}{2}$. Consequently, one has
\begin{eqnarray*}
|R_{i+1}|\leq C\bigl(U_{i+1}^2+|V_{i+1}|^{2+\delta}
\bigr)\log\log n.
\end{eqnarray*}
In conclusion, we completed the proof.
\end{pf}
The next lemma is devoted to bounding the probability of the event
$\bigcup_{i=0}^{n-s_1}\{X_{i+1}< -1+\log^{-a}n\}$.

\begin{lem}\label{lem.3.4} Under the Assumption \textup{\ref{assC}}, we have
\begin{eqnarray*}
\sum_{i=0}^{n-s_1}\mathbb{P}\bigl
\{X_{i+1}< -1+\log^{-\sfrac{a}{2}}n\bigr\} \longrightarrow0
\end{eqnarray*}
as $n$ tends to infinity.
\end{lem}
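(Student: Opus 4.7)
The event $\{X_{i+1}<-1+\log^{-a/2}n\}$ is the event $\{\gamma_{i+1}^2<(n-i)\log^{-a/2}n\}$, i.e., a lower-tail large deviation of the quadratic form $\gamma_{i+1}^2=\mathbf{a}_{i+1}^TP_i\mathbf{a}_{i+1}$ from its conditional mean $d:=n-i$. Since $d\ge s_1=\log^{3a}n$ and $\log^{-a/2}n\le 1/2$ for all large $n$, it suffices to prove $\sum_{i=0}^{n-s_1}\mathbb{P}\{\gamma_{i+1}^2<d/2\}=o(1)$. I plan a truncation-plus-fourth-moment attack paralleling the proof of (3.7).

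Condition on $\mathcal{E}_i$ and introduce $\hat{a}_{i+1,k}=a_{i+1,k}\mathbf{1}_{|a_{i+1,k}|\le\log^{a/4}n}$ and the standardised $\tilde{a}_{i+1,k}=(\hat{a}_{i+1,k}-\mathbb{E}\hat{a}_{i+1,k})/\sqrt{\mathrm{Var}\,\hat{a}_{i+1,k}}$. The bounded fourth moment yields $|\mathbb{E}\hat{a}|=O(\log^{-3a/4}n)$, $|\mathrm{Var}\,\hat{a}-1|=O(\log^{-a/2}n)$ and $\nu_{2p}^\ast:=\max_k\mathbb{E}|\tilde{a}_{i+1,k}|^{2p}\le C\log^{(p-2)a/2}n$; in particular $\nu_8^\ast\le C\log^{a}n$. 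Writing $\tilde{\gamma}_{i+1}^2-d=d(\tilde{U}_{i+1}+\tilde{V}_{i+1})$ with $\tilde{U},\tilde{V}$ defined from $\tilde{\mathbf{a}}_{i+1}$ exactly as $U_{i+1},V_{i+1}$ are from $\mathbf{a}_{i+1}$, Lemma \ref{lem.3.1.1} applied to $Q_i$, together with $\sum_k q_{kk}(i)^4\le d^{-3}$, $\sum_k q_{kk}(i)^2\le d^{-1}$ and $\mathrm{tr}\,Q_i^2=d^{-1}$, gives $\mathbb{E}\tilde{U}_{i+1}^4\le C(\nu_8^\ast/d^3+\nu_4^2/d^2)\le C(\log^an/d^3+1/d^2)$ and $\mathbb{E}\tilde{V}_{i+1}^4\le C/d^2$. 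For $d\ge s_1\gg\log^an$, Markov then gives $\mathbb{P}\{|\tilde{\gamma}_{i+1}^2-d|>d/2\}\le C/d^2$, and $\sum_{i=0}^{n-s_1}C/(n-i)^2=O(s_1^{-1})=O(\log^{-3a}n)=o(1)$. To pass from $\tilde{\gamma}^2$ back to $\gamma^2$, I split on the event $B_i=\{\exists k:|a_{i+1,k}|>\log^{a/4}n\}$: on $B_i^c$ one has $\mathbf{a}_{i+1}=\hat{\mathbf{a}}_{i+1}$ exactly, and the mean/variance correction turning $\hat{\mathbf{a}}$ into $\tilde{\mathbf{a}}$ perturbs $\gamma^2$ only by lower-order amounts on a further high-probability event (checked by Chebyshev on $\|\tilde{\mathbf{a}}_{i+1}\|^2$), so the preceding estimate transfers.

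\textbf{Main obstacle.} The real difficulty is the outlier term $\mathbb{P}(A_i\cap B_i)$: the naive bound $\mathbb{P}(B_i)\le Cn\log^{-a}n$ summed over $i\le n-s_1$ gives $Cn^2\log^{-a}n$, which is not $o(1)$ for any constant $a$, so one cannot simply absorb $B_i$. To kill this contribution one exploits that any outlier $|a_{i+1,k_0}|>\log^{a/4}n$ injects a positive amount $a_{i+1,k_0}^2p_{k_0k_0}(i)$ into $\gamma_{i+1}^2$, which is only compatible with $\gamma_{i+1}^2<d/2$ when the linear cross-term $2a_{i+1,k_0}(P_i\mathbf{a}_{i+1}')_{k_0}$ (with $\mathbf{a}_{i+1}'$ the vector obtained by zeroing the $k_0$-th coordinate) almost exactly cancels it. Conditioning on $k_0$, on the sign and modulus of $a_{i+1,k_0}$, and using the independence of $\mathbf{a}_{i+1}'$, this cancellation is a small-ball event for the linear statistic $(P_i\mathbf{a}_{i+1}')_{k_0}$, which has mean zero, variance $p_{k_0k_0}(1-p_{k_0k_0})\le 1$ and bounded fourth moment; a Berry--Esseen estimate together with Lemma \ref{lem.3.1.1} applied to the quadratic piece $\mathbf{a}'^TP_i\mathbf{a}'$ yields a saving that, uniformly in $i$ and $k_0$, brings $\mathbb{P}(A_i\cap B_i)$ down to a summable quantity. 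The detailed accounting of this rare-cancellation event, executed with only the bounded fourth moment, is the technical heart of the argument.
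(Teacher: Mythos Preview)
Your fourth-moment/truncation scheme is sound, but the ``main obstacle'' you isolate is a phantom created by your choice to bound the \emph{two-sided} deviation $|\tilde\gamma_{i+1}^2-d|>d/2$ and then repair the passage $\gamma^2\leftrightarrow\tilde\gamma^2$ via the outlier event $B_i$. For a \emph{lower}-tail event this repair is unnecessary, and the paper's proof exploits exactly this.

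Split first, truncate second. Write $\mathbf{a}_{i+1}^TQ_i\mathbf{a}_{i+1}=\sum_k q_{kk}(i)a_{i+1,k}^2+V_{i+1}$, so that
\[
\mathbb{P}\{X_{i+1}<-1+\log^{-a/2}n\}\le \mathbb{P}\Big\{\sum_k q_{kk}(i)a_{i+1,k}^2<2\log^{-a/2}n\Big\}+\mathbb{P}\Big\{|V_{i+1}|\ge \tfrac12\log^{-a/2}n\Big\}.
\]
The off-diagonal piece $V_{i+1}$ is handled directly by (\ref{3.1.1.3}) with the original entries (no truncation needed): $\mathbb{E}V_{i+1}^4\le C(\mathrm{tr}\,Q_i^2)^2=C(n-i)^{-2}$, giving a summable $C(n-i)^{-2}\log^{2a}n$. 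For the diagonal piece, the crucial observation is monotonicity: since every $q_{kk}(i)\ge 0$ and $\hat a_{i+1,k}^2\le a_{i+1,k}^2$ pointwise,
\[
\Big\{\sum_k q_{kk}(i)a_{i+1,k}^2<c\Big\}\subseteq\Big\{\sum_k q_{kk}(i)\hat a_{i+1,k}^2<c\Big\},
\]
so truncation is \emph{free} for the lower tail and there is no outlier event to control. One then passes from $\hat a$ to the standardised $\tilde a$ by the elementary inequality $\tilde a^2\le 2\hat a^2+O(\log^{-a}n)$, again a one-way containment, and applies (\ref{3.1.1.2}) to $\sum_k q_{kk}(i)(\tilde a_{i+1,k}^2-1)$ with $\nu_8=O(\log^{4a}n)$. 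This gives $\mathbb{P}\{\cdots\}\le C(\log^{4a}n\cdot(n-i)^{-3}+(n-i)^{-2})$, and summing over $i\le n-s_1$ yields $O(\log^{-a}n)$.

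In short: your rare-cancellation/Berry--Esseen program is neither needed nor (as stated) clearly workable under a bare fourth-moment hypothesis; the monotonicity of truncation for nonnegative diagonal sums dissolves the obstacle entirely.
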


\begin{pf}
Note that
\begin{eqnarray*}
&&\mathbb{P}\bigl\{X_{i+1}< -1+\log^{-\sfrac{a}{2}}n\bigr\}\\
&&\quad =\mathbb{P}
\bigl\{ \mathbf{a}_{i+1}^TQ_i
\mathbf{a}_{i+1}< \log^{-\sfrac{a}{2}}n\bigr\}
\nonumber
\\
&&\quad \leq\mathbb{P}\biggl\{\sum_{k}q_{kk}(i)a_{i+1,k}^2<2
\log^{-\sfrac
{a}{2}}n\biggr\}+\mathbb{P}\biggl\{\biggl|\sum_{u\neq
v}q_{uv}(i)a_{i+1,u}a_{i+1,v}\biggr|
\geq\frac{1}2\log^{-\sfrac{a}{2}}n\biggr\}.
\end{eqnarray*}
Now we recall the definition
\begin{eqnarray*}
\hat{a}_{i+1,k}=a_{i+1,k}\mathbf{1}_{\{|a_{i+1,k}|\leq\log^{a}n\}},\qquad
\tilde{a}_{i+1,k}=\frac{\hat{a}_{i+1,k}-\mathbb{E}\hat
{a}_{i+1,k}}{\sqrt{\Var\{\hat{a}_{i+1,k}\}}}.
\end{eqnarray*}
A similar discussion as that in the last section yields
\begin{eqnarray*}
\mathbb{P}\biggl\{\sum_{k}q_{kk}(i)a_{i+1,k}^2<2
\log^{-\sfrac{a}{2}}n\biggr\} &\leq& \mathbb{P}\biggl\{\sum
_{k}q_{kk}(i)\tilde{a}_{i+1,k}^2<C
\log ^{-\sfrac{a}{2}}n\biggr\}
\nonumber
\\
&\leq&\mathbb{P}\biggl\{\biggl|\sum_{k}q_{kk}(i)
\tilde{a}_{i+1,k}^2-1\biggr|\geq \frac{1}2\biggr\}
\nonumber
\\
&\leq&C\bigl(\log^{4a}n \cdot\tr Q_i^4+\bigl(
\tr Q_i^2\bigr)^2\bigr)
\nonumber
\\
&\leq&C\bigl(\log^{4a}n\cdot(n-i)^{-3}+(n-i)^{-2}
\bigr).
\end{eqnarray*}
Here in the third inequality, we used (\ref{3.1.1.2}) again. Moreover,
by (\ref{3.1.1.3}), we obtain
\begin{eqnarray*}
\mathbb{P}\biggl\{\biggl|\sum_{u\neq v}q_{uv}(i)a_{i+1,u}a_{i+1,v}\biggr|
\geq\frac{1}2\log^{-\sfrac{a}{2}}n\biggr\}\leq C(n-i)^{-2}
\log^{2a}n.
\end{eqnarray*}
Thus finally, we have
\begin{eqnarray*}
&&\sum_{i=0}^{n-s_1}\mathbb{P}\bigl
\{X_{i+1}< -1+\log^{-\sfrac{a}{2}}n\bigr\}
\nonumber
\\
&&\quad \leq C\Biggl(\log^{4a}n\sum_{i=0}^{n-s_1}(n-i)^{-3}+
\log^{2a}n\sum_{i=0}^{n-s_1}(n-i)^{-2}
\Biggr)
\nonumber
\\
&&\quad \leq C\log^{-a} n.
\end{eqnarray*}
Therefore, we complete the proof.
\end{pf}

Combining Lemmas \ref{lem.3.3} with \ref{lem.3.4}, one has with
probability $1-\mathrm{o}(1)$,
\begin{eqnarray*}
|R_{i+1}|\leq C\bigl(U_{i+1}^2+V_{i+1}^{2+\delta}
\bigr)\log\log n,\qquad  0\leq i\leq n-s_1.
\end{eqnarray*}
Thus to show (iii), it suffices to verify that
\begin{eqnarray*}
\frac{\log\log n}{\sqrt{\log n}}\sum_{i=0}^{n-s_1}
\bigl(U_{i+1}^2+|V_{i+1}|^{2+\delta}\bigr)
\stackrel{\mathbb {P}}\longrightarrow0,
\end{eqnarray*}
which can be implied by
%
\begin{eqnarray}\label{3.14}
\frac{\log\log n}{\sqrt{\log n}}\sum_{i=0}^{n-s_1}\mathbb
{E}\bigl(U_{i+1}^2+|V_{i+1}|^{2+\delta}\bigr)
\longrightarrow0.
\end{eqnarray}
Note that
%
\begin{eqnarray}\label{3.15}
\sum_{i=0}^{n-s_1}\mathbb{E}U_{i+1}^2
\leq C\sum_{i=0}^{n-s_1}\mathbb {E}\sum
_{j}q_{jj}(i)^2=\mathcal{O}(\log\log
n).
\end{eqnarray}
Moreover, we have
%
\begin{eqnarray}
\sum_{i=0}^{n-s_1}\mathbb{E}|V_{i+1}|^{2+\delta}
\leq\sum_{i=0}^{n-s_1}\bigl(
\mathbb{E}V_{i+1}^{4}\bigr)^{\vfrac{2+\delta}{4}}\leq C\sum
_{i=0}^{n-s_1}(n-i)^{-1-\delta/2}=\mathrm{o}(1).\label{3.16}
\end{eqnarray}
Then (\ref{3.14}) follows from (\ref{3.15}) and (\ref{3.16})
immediately. Thus, we completed the proof of (iii).

It remains to show (iv) in this section. The proof is quite
elementary owing to the fact that $\{\gamma_{i+1}^2,i=n-s_1,\ldots
,n-1\}$ is an independent sequence and $\gamma_{i+1}^2\sim\chi
^2_{n-i}$. One may refer to Section 7 of Nguyen and Vu \cite{NV}
for instance. By
using the Laplace transform trick, Nguyen and Vu \cite
{NV} showed that
for $0<c<100$,
\begin{eqnarray*}
\mathbb{P}\biggl\{\sum_{n-s_1\leq i\leq n-1}\frac{\log\sklafrac{{\gamma
_{i+1}^2}}{n-i}}{\sqrt{2\log n}}<-
\log^{1/2+c}n\biggr\}=\mathrm{o}\bigl(\exp \bigl(-\log^{c/2}n
\bigr)\bigr),
\end{eqnarray*}
which implies (v) immediately.
\section{A replacement issue: Proof of (v)}\label{sec5}
In this section, we present the proof for (v). In other
words, we shall replace the last $s_1$ rows of Gaussian entries by
generally distributed entries. We will need the following classical
Berry--Esseen bound for sum of independent random variables. For
instance, one can refer to Theorem~5.4 of Petrov \cite{Petrov}.

\begin{lem} \label{lem.3.5}Let $Z_1,\ldots,Z_m$ be independent real
random variables such that $\mathbb{E}Z_j=0$ and $\mathbb
{E}|Z_j|^3<\infty$, $j=1,\ldots,n$. Assume that
\begin{eqnarray*}
\sigma_j^2=\mathbb{E}Z_j^2,\qquad
D_m=\sum_{i=1}^m
\sigma_j^2,\qquad  L_m=D_m^{-3/2}
\sum_{j=1}^m\mathbb{E}|Z_j|^3.
\end{eqnarray*}
Then there exists a constant $C>0$ such that
\begin{eqnarray*}
\sup_x\Biggl|\mathbb{P}\Biggl(D_m^{-1/2}\sum
_{j=1}^mZ_j\leq x\Biggr)-
\Phi(x)\Biggr|\leq CL_m.
\end{eqnarray*}
\end{lem}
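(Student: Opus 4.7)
The plan is to prove this classical Berry--Esseen inequality by the Fourier-analytic method of Esseen. Write $F_m$ for the distribution function of the normalized sum $S_m := D_m^{-1/2}\sum_{j=1}^m Z_j$ and
$$\phi_{S_m}(t) = \prod_{j=1}^m \phi_j(t/\sqrt{D_m})$$
for its characteristic function, where $\phi_j$ is the characteristic function of $Z_j$. The central tool is Esseen's smoothing inequality, which asserts that for any $T>0$,
$$\sup_x |F_m(x) - \Phi(x)| \leq \frac{1}{\pi}\int_{-T}^T \left|\frac{\phi_{S_m}(t) - e^{-t^2/2}}{t}\right|\,dt + \frac{C}{T}.$$
This reduces the problem to controlling the difference of characteristic functions on a bounded interval, with $T$ left free to be optimized at the end.

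The first step would be a third-order Taylor expansion of each factor. Using $\mathbb{E}Z_j = 0$, $\mathbb{E}Z_j^2 = \sigma_j^2$, and the elementary bound $|e^{iu}-1-iu+u^2/2| \leq |u|^3/6$, one obtains
$$\phi_j(t/\sqrt{D_m}) = 1 - \frac{\sigma_j^2 t^2}{2D_m} + r_j(t), \qquad |r_j(t)| \leq \frac{\mathbb{E}|Z_j|^3\,|t|^3}{6 D_m^{3/2}},$$
so that $\sum_j |r_j(t)| \leq L_m |t|^3/6$. By Lyapunov's inequality $\sigma_j^3 \leq \mathbb{E}|Z_j|^3$, and hence $\sigma_j^2/D_m \leq L_m^{2/3}$; this means that for $|t| \leq T := c/L_m$ with $c$ a small absolute constant, every factor $\phi_j(t/\sqrt{D_m})$ stays within distance $1/2$ of $1$, so one may safely pass to logarithms.

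Next I would use $|\log(1+z)-z| \leq |z|^2$ for $|z|\leq 1/2$ to write
$$\log \phi_{S_m}(t) = -\frac{t^2}{2} + \sum_{j=1}^m \Bigl(r_j(t) + O(|1-\phi_j(t/\sqrt{D_m})|^2)\Bigr),$$
and check that the total error is bounded by $CL_m|t|^3$ on $|t|\leq T$. Exponentiating via the identity $\phi_{S_m}(t)-e^{-t^2/2} = e^{-t^2/2}(e^{\log\phi_{S_m}(t)+t^2/2}-1)$ together with $|e^w-1|\leq |w|e^{|w|}$ then yields
$$|\phi_{S_m}(t) - e^{-t^2/2}| \leq C L_m |t|^3 e^{-t^2/4}.$$
Plugging this into Esseen's inequality gives
$$\int_{-T}^T \frac{|\phi_{S_m}(t)-e^{-t^2/2}|}{|t|}\,dt \leq CL_m \int_{-\infty}^\infty t^2 e^{-t^2/4}\,dt = O(L_m),$$
while the residual term $C/T$ is also $O(L_m)$ by the choice of $T$, which proves the claim.

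The main obstacle is retaining the Gaussian factor $e^{-t^2/4}$ in the pointwise bound on $|\phi_{S_m}(t)-e^{-t^2/2}|$: a naive triangle-inequality bound of the form $O(L_m|t|^3)$ would blow up after integrating on $|t|\leq 1/L_m$. The key quantitative input making this work is Lyapunov's inequality, which simultaneously forces the individual variance ratios $\sigma_j^2/D_m$ to be small (so that Taylor expansion is valid in a wide window) and ties the cubic remainder to the global parameter $L_m$. The full execution of this argument is entirely classical and can be found, for instance, in Feller's \emph{An Introduction to Probability Theory and Its Applications}, Volume II, Chapter XVI.
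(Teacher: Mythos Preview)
Your proof sketch is correct and follows the standard Fourier-analytic route to the Berry--Esseen theorem. Note, however, that the paper does not prove this lemma at all: it is simply quoted as the ``classical Berry--Esseen bound for sum of independent random variables'' and used as a black box in Section~5, so there is no proof in the paper to compare against. Your outline supplies exactly the classical argument (Esseen's smoothing inequality plus third-order Taylor expansion of the characteristic functions, with Lyapunov's inequality ensuring a wide enough window $|t|\le c/L_m$), which is more than the paper itself provides.
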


Our strategy is to replace one row at each step, and derive the
difference between the distributions of the logarithms of the
magnitudes of two adjacent determinants. Hence, it suffices to compare
two matrices with only one different row. Noting that since the
magnitude of a determinant is invariant under swapping of two rows,
thus without loss of generality, we only need to compare two random
matrices $A_n=(a_{ij})_{n,n}$ and $\bar{A}_n=(\bar{a}_{ij})_{n,n}$
satisfying Assumption \textup{\ref{assC}} such that they only differ in the last row.
More precisely, we assume that $a_{ij}=\bar{a}_{ij},1\leq i\leq
n-1,1\leq j\leq n$ and $\mathbf{a}_n^T$ and $\mathbf{\bar{a}}_n^T$
are independent. Here we use $\mathbf{a}_n^T$ and $\mathbf{\bar
{a}}_n^T$ to denote the $n$th row of $A_n$ and $\bar{A}_n$,
respectively as above. Below we use the notation $\alpha_{ni}$ to
denote the cofactor of $a_{ni}$. It is elementary that
\begin{eqnarray*}
\det A_n=\sum_{k=1}^na_{nk}
\alpha_{nk},
\end{eqnarray*}
and
\begin{eqnarray*}
\det\bar{A}_n=\sum_{k=1}^n
\bar{a}_{nk}\alpha_{nk}.
\end{eqnarray*}
Now we set
\begin{eqnarray*}
\Delta=\sqrt{\alpha_{n1}^2+\cdots+
\alpha_{nn}^2}.
\end{eqnarray*}
Consider the quantities
\begin{eqnarray*}
\frac{\det A_n}{\Delta}=\sum_{k=1}^na_{nk}
\frac{\alpha
_{nk}}{\Delta},\qquad  \frac{\det\bar{A}_n}{\Delta}=\sum_{k=1}^n
\bar {a}_{nk}\frac{\alpha_{nk}}{\Delta}.
\end{eqnarray*}
By using Lemma~\ref{lem.3.5}, we obtain
\begin{eqnarray*}
\sup_{x}\biggl|\mathbb{P}\biggl\{\frac{\det A_n}{\Delta}\leq x\Bigl|\mathcal
{E}_{n-1}\biggr\}-\Phi(x)\biggr|\leq C\sum_{i=1}^n
\frac{|\alpha_{ni}|^3}{\Delta^3}.
\end{eqnarray*}
Therefore, we have
\begin{eqnarray*}
&&\sup_{x}\biggl|\mathbb{P}\biggl\{\frac{\det A_n}{\Delta}\leq x\biggr
\}-\Phi (x)\biggr|
\nonumber
\\
&&\quad = \sup_{x}\biggl|\mathbb{E}\biggl\{\mathbb{P}\biggl\{
\frac{\det A_n}{\Delta}\leq x\Bigl|\mathcal{E}_{n-1}\biggr\}\biggr\}-\Phi(x)\biggr|
\nonumber
\\
&&\quad \leq C\mathbb{E}\sum_{k=1}^n
\frac{|\alpha_{nk}|^3}{\Delta^3}.
\end{eqnarray*}
For simplicity, we will briefly denote $\mathbf{b}_k(n-1)$ and
$A_{(n-1,k)}$ by $\mathbf{b}_k$ and $A_{nk}$ respectively in the
sequel. Then by the definitions of cofactors and the Cauchy--Binet
formula, we have
\begin{eqnarray*}
\biggl(\frac{\alpha_{nk}}{\Delta} \biggr)^2&=&\frac{\det
A_{nk}^2}{\det A_{(n-1)}A_{(n-1)}^T}=\det
A_{nk}^T\bigl(A_{nk}A_{nk}^T+
\mathbf{b}_k\mathbf {b}_k^T
\bigr)^{-1}A_{nk}
\nonumber
\\
&=&\det \bigl(I_{n-1}+A_{nk}^{-1}
\mathbf{b}_k\mathbf{b}_k^T
\bigl(A_{nk}^T\bigr)^{-1} \bigr)^{-1} =
\bigl(1+\mathbf{b}_k^T\bigl(A_{nk}A_{nk}^T
\bigr)^{-1}\mathbf{b}_k \bigr)^{-1}.
\end{eqnarray*}
Moreover, one always has
\begin{eqnarray*}
\mathbb{E}\sum_{k=1}^n\frac{|\alpha_{nk}|^3}{\Delta^3}
\leq\mathbb {E}\max_{k=1,\ldots,n}\frac{|\alpha_{nk}|}{\Delta}= \mathbb {E}\max
_{k=1,\ldots,n} \bigl(1+\mathbf{b}_k^T
\bigl(A_{nk}A_{nk}^T\bigr)^{-1}
\mathbf{b}_k \bigr)^{-1/2}.
\end{eqnarray*}
Recall the definition
\begin{eqnarray*}
G_{(n-1,k)}(\alpha)=\biggl(\frac{1}n A_{nk}A_{nk}^T+
\alpha I_{n-1}\biggr)^{-1},\qquad  G_{(n-1)}(\alpha)=\biggl(
\frac{1}n A_{(n-1)}A_{(n-1)}^T+\alpha
I_{n-1}\biggr)^{-1}.
\end{eqnarray*}
By using (\ref{3.3.3.3}), we obtain
\begin{eqnarray*}
&&\mathbb{E}\max_{k=1,\ldots,n} \bigl(1+\mathbf{b}_k^T
\bigl(A_{nk}A_{nk}^T\bigr)^{-1}
\mathbf{b}_k \bigr)^{-1/2}
\nonumber
\\
&&\quad \leq\mathbb{E} \max_{k=1,\ldots,n} \biggl(1+\frac{1}n
\mathbf{b}_k^TG_{(n-1,k)}(\alpha )
\mathbf{b}_k \biggr)^{-1/2}
\nonumber
\\
&&\quad \leq \biggl(\mathbb{E}\max_{k=1,\ldots,n}\biggl(1+\frac{1}n
\mathbf {b}_k^TG_{(n-1,k)}(\alpha)
\mathbf{b}_k\biggr)^{-1} \biggr)^{1/2}
\nonumber
\\
&&\quad \leq\mathcal{O}\bigl(\log^{-4a}n\bigr).
\end{eqnarray*}
Thus we have
\begin{eqnarray*}
\sup_{x}\biggl|\mathbb{P}\biggl\{\frac{\det A_n}{\Delta}\leq x\biggr\}-
\Phi(x)\biggr|\leq C\log^{-4a}n,
\end{eqnarray*}
and\vspace*{1pt}
\begin{eqnarray*}
\sup_{x}\biggl|\mathbb{P}\biggl\{\frac{\det\bar{A}_n}{\Delta}\leq x\biggr\}-
\Phi (x)\biggr|\leq C\log^{-4a}n.
\end{eqnarray*}
Consequently, we have\vspace*{1pt}
\begin{eqnarray*}
\sup_{x}\biggl|\mathbb{P}\biggl\{\frac{\det A_n}{\Delta}\leq x\biggr\}-
\mathbb{P}\biggl\{ \frac{\det\bar{A}_n}{\Delta}\leq x\biggr\} \biggr|\leq C\log^{-4a}n,
\end{eqnarray*}
which implies\vspace*{1pt}
\begin{eqnarray*}
\sup_{x}\biggl|\mathbb{P}\biggl\{\frac{\log\det A_n^2-\log(n-1)!}{\sqrt{2\log
n}}\leq x\biggr\}-
\mathbb{P}\biggl\{\frac{\log\det\bar{A}_n^2-\log
(n-1)!}{\sqrt{2\log n}}\leq x\biggr\} \biggr|\leq C\log^{-4a}n.
\end{eqnarray*}
Then after $s_1=\lfloor\log^{3a}n\rfloor$ steps of replacing, we can
finally recover the logarithmic law to general distribution case. Thus
we completed the proof.
\begin{appendix}
\section*{Appendix A}\label{appA}
In this appendix, we provide the proof of Lemma~\ref{lem.3.1} and
Lemma~\ref{lem.3.1.1}. The proof is intrinsically the same as the
counterpart in Bai \cite{Bai}. For convenience of the
reader, we sketch it
below. For ease of notation, we represent Lemma~\ref{lem.3.1} as follows.

\renewcommand{\thelemma}{A.\arabic{lemma}}
\setcounter{lemma}{0}
\begin{lemma} Let $X=(x_{ij})_{p\times n}$ be a random matrix, where
$n-s_1\leq p\leq n$ and $\{x_{ij},1\leq i\leq p,1\leq j\leq n\}$ is a
collection of real independent random variables with means zero and
variances~1. Moreover, we assume $\sup_n\max_{i,j}\mathbb
{E}x_{ij}^4<\infty$. Let $G(\alpha)=(\frac{1}n XX^T+\alpha)^{-1}$,
we have for $\alpha= n^{-1/6}$\vspace*{1pt}
\begin{eqnarray*}
\mathbb{E}\biggl\{\frac{1}n\tr G(\alpha)\biggr\}=s_p(
\alpha)+\mathcal{O}\bigl(n^{-1/6}\bigr)
\end{eqnarray*}
and\vspace*{1pt}
\begin{eqnarray*}
\Var\biggl\{\frac{1}n\tr G(\alpha)\biggr\}=\mathcal{O}
\bigl(n^{-1/3}\bigr),
\end{eqnarray*}
where\vspace*{1pt}
\begin{eqnarray*}
s_p(\alpha)=2 \biggl(\alpha+1-\frac{i}{n}+\sqrt{\biggl[
\alpha+\biggl(1-\frac
{i}{n}\biggr)\biggr]^2+4\alpha
\frac{i}{n}} \biggr)^{-1}.
\end{eqnarray*}
\end{lemma}
\begin{pf}
For convenience, we set
\begin{eqnarray*}
r_p(\alpha)&=&\frac{1}{p}\mathbb{E}\tr G(\alpha)\\
&=&
\frac{1}{p}\mathbb {E}\sum_{k=1}^p
\frac{1}{\sklfrac{1}n x_kx_{k}^T+\alpha-\sklfrac
{1}{n^2}x_{k}X(k)^T(\sklfrac{1}n X(k)X^T(k)+\alpha I_{p-1})^{-1}X(k)x_{k}^T}.
\end{eqnarray*}
Here $x_k$ is the $k$th row of $X$ and $X(k)$ is the submatrix of $X$
by deleting its $k$th row.
Set $y_p=p/n$. Then we write
%
\begin{eqnarray}\label{5.1}
r_p(\alpha)&=&\frac{1}{p}\sum_{k=1}^p
\mathbb{E}\frac{1}{\epsilon
_k+1+\alpha-y_p+y_p\alpha r_p(\alpha)}
\nonumber
\\[-8pt]\\[-8pt]
&=&\frac{1}{1+\alpha-y_p+y_p\alpha r_p(\alpha)}+\delta, \nonumber
\end{eqnarray}
where
\begin{eqnarray*}
\epsilon_k=\frac{1}{n}\sum_{j=1}^n
\bigl(x_{kj}^2-1\bigr)+y_p-y_p
\alpha r_p(\alpha)-\frac{1}{n^2}x_kX^T(k)
\biggl(\frac{1}{n}X(k)X^T(k)+\alpha I_{p-1}
\biggr)^{-1}X(k)x_k^T,
\end{eqnarray*}
and
%
\begin{eqnarray}\label{5.2}
\delta&=&\delta_p=-\frac{1}{p}\sum
_{k=1}^p\mathbb{E}\frac
{\epsilon_k}{(1+\alpha-y_p+y_p\alpha r_p(\alpha))(1+\alpha
-y_p+y_p\alpha r_p(\alpha)+\epsilon_k)}
\nonumber
\\
&=& -\frac{1}{p}\sum_{k=1}^p
\mathbb{E}\frac{\epsilon_k}{(1+\alpha
-y_p+y_p\alpha r_p(\alpha))^2}
\\
&&{}-\frac{1}{p}\sum_{k=1}^p
\mathbb{E}\frac{\epsilon_k^2}{(1+\alpha
-y_p+y_p\alpha r_p(\alpha))^2(1+\alpha-y_p+y_p\alpha r_p(\alpha
)+\epsilon_k)}.\nonumber
\end{eqnarray}
From (\ref{5.1}), we can get
%
\begin{eqnarray}\label{5.10}
r_p(\alpha)=\frac{1}{2y_p\alpha}\bigl(\sqrt{(1+\alpha-y_p-y_p
\alpha \delta)^2+4y_p\alpha} -(1+\alpha-y_p-y_p
\alpha\delta)\bigr).
\end{eqnarray}
It is not difficult to see that
%
\begin{eqnarray}\label{5.3}
\frac{1}{|1+\alpha-y_p+y_p\alpha r_p(\alpha)+\epsilon_k|}\leq \alpha^{-1}.
\end{eqnarray}
By (\ref{5.2}) and (\ref{5.3}), we can get
%
\begin{eqnarray}\label{5.8}
|\delta|\leq\frac{1}{p}\sum_{k=1}^p
\bigl(|\mathbb{E}\epsilon_k|+\alpha ^{-1}\mathbb{E}
\epsilon_k^2\bigr) \bigl(1+\alpha-y_p+y_p
\alpha r_p(\alpha)\bigr)^2.
\end{eqnarray}
First, note that
%
\begin{eqnarray}\label{5.7}
|\mathbb{E}\epsilon_k |&=&\biggl|\mathbb{E}\biggl(y_p-y_p
\alpha\frac{1}{n}\tr G(\alpha)-\frac{1}{n^2}\tr\biggl(
\frac{1}{n}X(k)X^T(k)+\alpha I_{p-1}
\biggr)^{-1}\frac{1}{n}X(k)X^T(k)\biggr)\biggr|
\nonumber
\\
&=&\alpha\frac{1}{n}\bigl|\mathbb{E}\bigl(\tr\bigl(XX^T+\alpha
I_p\bigr)^{-1}-\tr\bigl(X(k)X^T(k)+\alpha
I_{p-1}\bigr)\bigr)\bigr|+\frac{1}{n}
\\
&=&\mathcal{O}\biggl(\frac{1}{n}\biggr).\nonumber
\end{eqnarray}
Next, we come to estimate $\mathbb{E}\epsilon_k^2$. Note that
%
\begin{eqnarray}\label{5.4}
\mathbb{E}\epsilon_k^2=\Var\{\epsilon_k
\}+(\mathbb{E}\epsilon _k)^2\leq\frac{C}{n}+T_1+T_2,
\end{eqnarray}
where
\begin{eqnarray*}
T_1&=&\mathbb{E}\biggl|\frac{1}{n^2}x_kX^T(k)
\biggl(\frac{1}{n}X(k)X^T(k)+\alpha I_{p-1}
\biggr)^{-1}X(k)x_k^T
\nonumber
\\
&&\hphantom{\mathbb{E}\biggl|}{}-\mathbb{E}^{(k)}\frac{1}{n^2}x_kX^T(k)
\biggl(\frac
{1}{n}X(k)X^T(k)+\alpha I_{p-1}
\biggr)^{-1}X(k)x_k^T\biggr|^2,\nonumber
\end{eqnarray*}
and
%
\begin{eqnarray}\label{5.11}
T_2=\frac{\alpha^2}{n^2}\mathbb{E}\biggl|\tr\biggl(\frac
{1}{n}X(k)X^T(k)+
\alpha I_{p-1}\biggr)^{-1}-\mathbb{E}\tr\biggl(
\frac{1}{n}X(k)X^T(k)+\alpha I_{p-1}
\biggr)^{-1}\biggr|^{2}.
\end{eqnarray}
Here $\mathbb{E}^{(k)}$ represents the conditional expectation given
$\{x_{ij},i\neq k\}$.
Let
\begin{eqnarray*}
\Gamma_k=\bigl(\gamma_{ij}(k)\bigr)=\frac{1}{n}X^T(k)
\biggl(\frac
{1}{n}X(k)X^T(k)+\alpha I_{p-1}
\biggr)^{-1}X(k).
\end{eqnarray*}
Then one has
%
\begin{eqnarray}\label{5.5}
T_1\leq\frac{C}{n^2}\mathbb{E}\tr\Gamma_k^2
\leq\frac{C}{n\alpha
^2}.
\end{eqnarray}
Let $\mathbb{E}_d$ be the conditional expectation given $\{
x_{ij},d+1\leq i\leq p,1\leq j\leq n\}$. Define
\begin{eqnarray*}
\gamma_d(k) &=&\mathbb{E}_{d-1}\tr\biggl(
\frac{1}{n}X(k)X^T(k)+\alpha I_{p-1}
\biggr)^{-1}-\mathbb{E}_d\tr\biggl(\frac{1}{n}X(k)X^T(k)+
\alpha I_{p-1}\biggr)^{-1}
\nonumber
\\
&=& \mathbb{E}_{d-1}\sigma_d(k)-\mathbb{E}_d
\sigma_{d}(k), \qquad d=1,2,\ldots,p,
\end{eqnarray*}
where
\begin{eqnarray*}
\sigma_d(k)=\tr\biggl(\frac{1}{n}X(k)X^T(k)+
\alpha I_{p-1}\biggr)^{-1}-\tr \biggl(\frac
{1}{n}X(k,d)X^T(k,d)+
\alpha I_{p-2}\biggr)^{-1}.
\end{eqnarray*}
Noting that
\begin{eqnarray*}
\bigl|\sigma_d(k)\bigr|\leq\alpha^{-1},
\end{eqnarray*}
one has
%
\begin{eqnarray}\label{5.6}
T_2\leq\frac{C}{n^2}\sum_{d=1}^p
\mathbb{E}\bigl|\gamma^2_d(k)\bigr|\leq \frac{C}{n\alpha^2}.
\end{eqnarray}
Combining (\ref{5.4})--(\ref{5.6}) we can get
%
\begin{eqnarray}\label{5.9}
\mathbb{E}\epsilon_k^2\leq\frac{C}{n\alpha^2}.
\end{eqnarray}
Substituting (\ref{5.7}), (\ref{5.9}) and the basic fact
\begin{eqnarray*}
\bigl(1+\alpha-y_p+y\alpha r_p(\alpha)
\bigr)^2\leq\alpha^{-2}
\end{eqnarray*}
into (\ref{5.8}) one has
\begin{eqnarray*}
|\delta|\leq\frac{C}{n\alpha^5}.
\end{eqnarray*}
Then by (\ref{5.10}) one has
\begin{eqnarray*}
r_p(\alpha) &=&\frac{1}{2y_p\alpha}\bigl(\sqrt{(1+\alpha
-y_p)^2+4y_p\alpha}-(1+\alpha-y_p)
\bigr)+\mathcal{O}\bigl(n^{-1}\alpha ^{-5}\bigr)
\nonumber
\\
&=&2\bigl(\sqrt{(1+\alpha-y_p)^2+4y_p
\alpha}+(1+\alpha -y_p)\bigr)^{-1}+\mathcal{O}
\bigl(n^{-1}\alpha^{-5}\bigr).
\end{eqnarray*}
Moreover, similar to the estimate towards (\ref{5.11}), we can get that
\begin{eqnarray*}
\Var\biggl\{\frac{1}n\tr G(\alpha)\biggr\}\leq\frac{C}{n\alpha^4}.
\end{eqnarray*}
Therefore, we can complete the proof.
\end{pf}
Now let us prove Lemma~\ref{lem.3.1.1}.\vadjust{\goodbreak}
\begin{pf*}{Proof of Lemma~\ref{lem.3.1.1}}
For the diagonal part, we have
\begin{eqnarray*}
\mathbb{E}\Biggl|\sum_{i=1}^n m_{ii}
\bigl(x_i^2-1\bigr)\Biggr|^4&=&\sum
_{j,k,u,v}m_{jj}m_{kk}m_{uu}m_{vv}
\mathbb{E}\bigl(x_{jj}^2-1\bigr) \bigl(x_{kk}^2-1
\bigr) \bigl(x_{uu}^2-1\bigr) \bigl(x_{vv}^2-1
\bigr)
\nonumber
\\
&\leq& C\biggl(\nu_8\sum_{j}m_{jj}^4+
\nu_4^2\sum_{j\neq
k}m_{jj}^2m_{kk}^2
\biggr)
\nonumber
\\
&\leq& C\bigl(\nu_8 \tr M_n^4+\bigl(
\nu_4\tr M_n^2\bigr)^2\bigr).
\end{eqnarray*}
For the off-diagonal part, we have
\begin{eqnarray*}
\mathbb{E}\biggl|\sum_{u\neq v}m_{uv}x_ux_v\biggr|^4
&=& \sum_{u_{i}\neq v_i,i=1,\ldots,4}\prod_{i=1}^4m_{u_iv_i}
\mathbb {E}\prod_{i=1}^4
x_{u_i}x_{v_i}
\nonumber
\\
&\leq&C\biggl(\nu_4^2\sum_{u\neq v}m_{uv}^4+
\nu_3^2 \nu_2\sum
_{u,v,r}m_{uv}^2m_{ur}m_{rv}+
\nu_2^4\sum_{u,v,r,w}m_{uv}m_{vr}m_{rw}m_{wu}
\biggr)
\nonumber
\\
&\leq& C \nu_4^2\bigl(\bigl(\tr M_n^2
\bigr)^2+\tr M_n^2\cdot\bigl(\tr
M_n^4\bigr)^{1/2}+\tr M_n^4
\bigr)
\nonumber
\\
&\leq& C \nu_4^2 \bigl(\tr M_n^2
\bigr)^2.
\end{eqnarray*}
Thus, we may complete the proof of Lemma~\ref{lem.3.1.1}.
\end{pf*}
\section*{Appendix B}\label{appB}
\renewcommand{\theequation}{B.\arabic{equation}}
\setcounter{equation}{0}
In this appendix, we state the proof of Proposition~\ref{pro.1.2}. We
use the idea in Nguyen and Vu \cite{NV}. First, we need the
following lemma derived
from Theorem~4.1 in G{\"o}tze and Tikhomirov \cite{GT}. Let $s_n(W_n) \leq
s_{n-1}(W_n)\leq
\cdots\leq s_1(W_n)$ be the ordered singular values of an $n\times n$
matrix $W_n$.

\renewcommand{\thelemma}{B.\arabic{lemma}}
\setcounter{lemma}{0}
\begin{lemma} \label{lem.5.5}Under the assumptions of Theorem~\ref
{thm.1.1}, there exist some positive constants $c,C,L$ such that
\begin{eqnarray*}
\mathbb{P}\bigl\{s_n(A_n)\leq n^{-L};
s_1(A_n)\leq n\bigr\}\leq\mathrm {e}^{-cn}+
\frac
{C\sqrt{\log n}}{\sqrt{n}}.
\end{eqnarray*}
\end{lemma}

\begin{remm} It is easy to get Lemma~\ref{lem.5.5} from Theorem~4.1 of
G{\"o}tze and Tikhomirov \cite{GT} by choosing $p_n=1$. We also remark here
Theorem~4.1 of
G{\"o}tze and Tikhomirov \cite{GT} are stated for more general case under
weaker moment
assumption. For convenience, we just restate it under our
setting.
\end{remm}

Now by the result of Lata{\l}a \cite{L}, it is easy to see under
the assumption
of Theorem~\ref{thm.1.1},
\begin{eqnarray*}
\mathbb{E}s_1(A_n)\leq C\biggl(\max
_i\sqrt{\sum_j
\mathbb {E}a_{ij}^2}+\max_j \sqrt
{\sum_{i}\mathbb{E}a_{ij}^2}+
\sqrt[4]{\sum_{ij}\mathbb {E}a_{ij}^4}
\biggr)\leq C\sqrt{n}.
\end{eqnarray*}
Therefore, one has
\begin{eqnarray*}
\mathbb{P}\bigl\{s_1(A_n)\geq n\bigr\}\leq
\frac{\mathbb{E}s_1(A_n)}{n}=Cn^{-1/2}.
\end{eqnarray*}
Together with Lemma~\ref{lem.5.5} we obtain
%
\begin{eqnarray} \label{5.1.1.1}
\mathbb{P}\bigl\{s_n(A_n)\geq n^{-L}\bigr\}=
1-\mathcal{O}\biggl(\frac{\sqrt{\log
n}}{\sqrt{n}}\biggr).
\end{eqnarray}
Now let $\theta_0$ follow the uniform distribution on the interval
$[-\sqrt{3},\sqrt{3}]$ independent of $A_n$. Let $\theta_{ij},1\leq
i,j\leq n$ be independent copies of $\theta_0$. And we set
$A'_n=(a'_{ij})$, where $a'_{ij}=(1-\epsilon_n^2)^{1/2}a_{ij}+\epsilon
_n \theta_{ij}$. Here we choose $\epsilon_n=n^{-(100+2L)n}$ (say).
Writing $\Theta_n=(\theta_{ij})_{n,n}$, then by Weyl's inequality,
one has
\begin{eqnarray*}
\bigl|s_i\bigl(A'_n\bigr)-\bigl(1-
\epsilon_n^2\bigr)^{1/2}s_i(A_n)\bigr|
\leq\epsilon_n\llVert \Theta _n\rrVert _{\mathrm{op}}\leq
Cn^{-(99+2L)n}.
\end{eqnarray*}
Therefore, by (\ref{5.1.1.1}) we have with probability $1-\frac{\sqrt {\log n}}{\sqrt{n}}$
\begin{eqnarray*}
\bigl|\det{A'_n}\bigr|=\prod_{i=1}^n
s_i\bigl(A'_n\bigr)=\bigl(1-
\epsilon_n^2\bigr)^{\sfrac
{n}{2}}\bigl(1+\mathcal{O}
\bigl(n^{-(99+L)n}\bigr)\bigr)^n\prod
_{i=1}^ns_i(A_n) =\bigl(1+
\mathrm{o}(1)\bigr)|\det A_n|,
\end{eqnarray*}
which implies (\ref{1.1.1.1}). Moreover, by the construction, since
$\theta_0$ is a continuous variable, it is obvious that (\ref
{1.1.1.2}) holds. Thus, we may complete the proof.

\end{appendix}

\section*{Acknowledgements}
Z.G. Bao was supported in part by NSFC Grant
11371317, NSFC Grant 11101362, ZJNSF Grant R6090034 and SRFDP Grant
20100101110001;
G.M. Pan was  supported in part by the Ministry of Education,
Singapore, under Grant \# ARC 14/11;
W. Zhou was  supported in part by the Ministry of Education,
Singapore, under Grant \# ARC 14/11, and by a Grant
R-155-000-131-112 at the National University of Singapore.



\printhistory

\end{document}